\documentclass{article} 
\usepackage[usenames,dvipsnames]{color}
\usepackage{amsmath} 
\usepackage{amssymb} 
\usepackage{bm}
\usepackage{amsthm}
\usepackage{CJK}
\usepackage{indentfirst}
\usepackage{tikz}
\usepackage{amsmath,amsfonts,amsthm,mathrsfs}
\usepackage{hyperref}
\numberwithin{equation}{section}
\usepackage{geometry}

\usepackage{esint} 

\numberwithin{equation}{section}
\newtheorem{theorem}{Theorem}
\newtheorem{lemma}{Lemma}

\newtheorem{corollary}{Corollary}
\newtheorem{definition}{Definition}
\newtheorem{remark}{Remark}

\newtheorem{assumption}{Assumption}
\usepackage{algorithm,algpseudocode}
\usepackage{amsthm}

\def\NN{\mathbb{N}}

\def\RR{\mathbb{R}}

\def\ZZ{\mathbb{Z}}
\def\YY{\mathbb{Y}}
\def\CC{\mathbb{C}}

\def\SS{\mathbb{S}}

\def\mal{\max\limits}
\def\mil{\min\limits}
\def\sul{\sum\limits}

\def\rr{ r}
\def\ss{ s}
\def\res{{\rm Res }}
\def\re{{\rm Re }}
\def\im{{\rm Im }}

\def\lt{\left}
\def\rt{\right}

\def\mH{\mathcal{H}^\rr(\SS^d)}

\title{Sharp Approximation on the Sphere by\\ Linearized Shallow Networks with Analytic Activations}
\author{Jia Li\thanks{Peking University, Beijing 100871, P. R. China}\and Tong Mao\thanks{Shenzhen Loop Area Institute, Shenzhen 518000, P. R. China}\and Jinchao Xu\thanks{King Abdullah University of Science and Technology, Thuwal 23955, Saudi Arabia}}
\date{}

\begin{document}
\maketitle

\begin{abstract}
We study the spectral behavior and approximation properties of linearized shallow networks with analytic activation functions on the unit sphere $\mathbb S^d$. Motivated by earlier spherical approximation results for analytic zonal functions \cite{mhaskar1999approximation}, we focus on the spectral information needed to obtain Sobolev-norm estimates together with explicit control of the outer coefficients. We first derive an asymptotic formula, expressed explicitly in terms of the residues, for the ultraspherical coefficients of a real-analytic function whose relevant singularities are a conjugate pair of simple poles on the imaginary axis. In particular, the formula gives a matching lower bound along the relevant high-degree subsequences and identifies the exponential decay scale and sign oscillation. It applies directly to $\tanh$, the logistic sigmoid, and the rational activations considered here, while the corresponding estimate for $\arctan$ is obtained through differentiation.

For such activation functions and quasi-uniform points $\{\theta_j^\ast\}_{j=1}^n \subset \mathbb S^d$, functions $f \in \mathcal H^r(\mathbb S^d)$ satisfying the appropriate spectral compatibility condition can be approximated by a linear combination
\[
f_n(\eta) = \sum_{j=1}^n a_j \sigma(\theta_j^\ast \cdot \eta),
\]
with the sharp Sobolev rate
\[
\|f - f_n\|_{\mathcal H^s(\mathbb S^d)} \lesssim n^{-\frac{r-s}{d}} \|f\|_{\mathcal H^r(\mathbb S^d)}.
\]
The approximation theorem places the $\mathcal H^s$ norm on the left-hand side and simultaneously provides an explicit normalized $\ell^2$ bound on the coefficients. For spectrally compatible analytic target functions, the same construction further gives the stated analytic convergence rate. Thus, the analysis connects spectral asymptotics and lower bounds at the degrees where the ultraspherical coefficients do not vanish with Sobolev-norm approximation and coefficient control for fixed analytic activation functions.
\end{abstract}

\section{Introduction}

Deep neural networks are constructed by composing multiple layers of nonlinear transformations, each of which can be viewed as a shallow neural network. In this sense, shallow networks serve as the basic building blocks of deep architectures. Understanding their structure and approximation properties is therefore a natural first step toward a theoretical understanding of deep learning models.

Given an activation function $\sigma:\mathbb R \to \mathbb R$, we define the class of shallow neural networks with $n$ neurons on $\Omega\subset\RR^d$ by
\begin{equation}\label{shallow}
    \Sigma_{n}^\sigma(\Omega)
    =
    \left\{
        x\mapsto\sum_{j=1}^n a_j \sigma\big(w_j\cdot x + b_j\big)
        :
        w_j \in \mathbb R^d,\;
        a_j,b_j \in \mathbb R
    \right\}.
\end{equation}
Alternatively, writing $\displaystyle\tilde x = \binom{x}{1} \in \mathbb R^{d+1}$ and $\displaystyle\theta_j = \binom{w_j}{b_j} \in \mathbb R^{d+1}$, the network can be expressed in the compact form $\sul_{j=1}^n a_j \sigma(\theta_j \cdot \tilde x)$.

On the unit sphere $\mathbb S^d \subset \mathbb R^{d+1}$, the corresponding spherical neural network class is
\[
\Sigma_{n}^\sigma(\mathbb S^d)
=
\left\{
    \eta\mapsto\sum_{j=1}^n a_j \sigma(\theta_j \cdot \eta)
    :
    \theta_j \in \mathbb S^d,\;
    a_j \in \mathbb R
\right\},
\]
where $\eta \in \mathbb S^d$. Spherical domains arise naturally when the data or the underlying symmetries possess an intrinsic directional or rotational structure. This has motivated rotation-equivariant spherical convolutional networks and graph-based spherical architectures \cite{cohen2018spherical,jiang2019spherical,defferrard2019deepsphere}, together with theoretical results on the approximation of spherical Sobolev functions, generalization for spherical-data classification, and approximation of nonlinear functionals defined on spaces of spherical functions \cite{fang2020theory,feng2023generalization,yang2026spherical}. A substantial approximation theory has also been developed for scattered-data interpolation and spherical basis functions \cite{jetter1999error,hubbert2004lp}, as well as for localized frames and constructive polynomial approximation on the sphere \cite{narcowich2006localized,sloan2012filtered}. The present work contributes to this literature by establishing sharp approximation rates for linearized shallow networks with analytic activations and fixed quasi-uniform directions on $\mathbb S^d$.


The approximation theory of shallow neural networks has a long history, beginning with the universal approximation phenomenon established around the late 1980s and early 1990s. In particular, qualitative density results show that single-hidden-layer networks can approximate broad classes of functions for a variety of activation functions (e.g., sigmoidal). Representative theorems include the universal approximation results of \cite{cybenko1989approximation,funahashi1989approximate,hornik1989multilayer} and the sharp characterization in \cite{leshno1993multilayer} stating that non-polynomial activations yield density in $\mathcal C(\Omega)$ for compact $\Omega$.

Beyond qualitative density, quantitative approximation rates for shallow networks have been investigated in several complementary function space settings. A recurring conclusion is that, for target functions in Barron or variation-type spaces, shallow networks can achieve the ``Monte-Carlo'' rate
\begin{equation}\label{half_related}
  \inf_{f_n\in \Sigma_n^\sigma(\Omega)}
  \|f - f_n\|_{\mathcal{L}^2(\Omega)}
  =\mathcal{O}(n^{-\frac{1}{2}}).
\end{equation}
This rate is typically obtained through sampling or selection arguments rooted in Maurey-type techniques; see, for example,
\cite{pisier1981remarques,maurey1973type,Jones1992,barron1993universal,barron1994approximation,devore1996some,makovoz1998uniform,kurkova1,kurkova2,lewicki2004approximation,temlyakov2008greedy,Barron2008,E2019population,E2019barron,E2020representation}. 
Comprehensive overviews of these developments can be found in 
\cite{pinkus1999approximation,DeVore2021,konyagin2018some}.

A complementary direction seeks sharper rates under additional structural assumptions on the target function. In particular, for Barron-type, spectral Barron, and related function classes, one can improve upon \eqref{half_related} and obtain dimension-dependent gains of the form
\begin{equation*}
  \inf_{f_n\in \Sigma_n^\sigma(\Omega)}
  \|f - f_n\|_{\mathcal{L}^2(\Omega)}
  = \mathcal{O}\!\left(n^{-\frac{1}{2}-\frac{\alpha}{d}}\right),
\end{equation*}
for some $\alpha>0$. Such refined estimates have been established in
\cite{makovoz1996random,bach2017breaking,klusowski2018approximation,xu2020finite,siegel2022high,siegel2022sharp,siegel2022optimal,mhaskar2023tractability,ma2022uniform,meng2022new,mao2023rates,yang2024optimal}. 
These results highlight that additional regularity or structural constraints on the target function can significantly enhance the approximation efficiency of shallow neural networks beyond the classical $n^{-1/2}$ regime.

Finally, although depth can provide additional advantages for compositional target functions \cite{poggio2017theory}, sharp smoothness-dependent approximation rates already arise at the level of shallow networks. In particular, for any $r>0$, functions in the Sobolev space $\mathcal{H}^r(\Omega)$ can be approximated by shallow networks with sufficiently smooth activations at the optimal rate \cite{mhaskar1993approximation,petrushev1998approximation,pinkus1999approximation} (see also \cite{devore1989optimal} for optimality)
\begin{equation}\label{eqn_smooth_nonlinear1}
    \inf_{f_n\in \Sigma_n^\sigma(\Omega)}
    \|f - f_n\|_{\mathcal{L}^2(\Omega)}
    = \mathcal{O}\!\left(n^{-\frac{r}{d}}\right).
\end{equation}
If the target function is analytic, then shallow networks with smooth or analytic activations achieve substantially faster convergence under suitable analyticity assumptions \cite{mhaskar1996neural},
\begin{equation}\label{eqn_smooth_nonlinear2}
    \inf_{f_n\in \Sigma_n^\sigma(\Omega)}
    \|f - f_n\|_{\mathcal{L}^2(\Omega)}
    = \mathcal{O}\!\left(\rho^{-\sqrt[d]{n}}\right).
\end{equation}
These results highlight that even in the absence of depth, shallow architectures can attain approximation rates that sharply reflect the regularity of the target function.

Earlier work \cite{mhaskar1999approximation} established sharp $\mathcal L^p$ Sobolev approximation orders on $\SS^d$ for zonal-function networks generated by analytic activation functions. In the notation used here, if the ultraspherical coefficients satisfy
\begin{equation}\label{eqn_intro_coeff_rate}
|\widehat{\sigma}(m)|\simeq m^\alpha\rho^{-m},
\end{equation}
then, for a quasi-uniform set $\{\theta_j^*\}_{j=1}^n\subset\mathbb S^d$, the linearized space
\begin{equation}\label{linearV}
L_n^\sigma=L_n^\sigma(\{\theta_j^*\}_{j=1}^n)=\left\{\eta\mapsto\sum_{j=1}^na_j\sigma(\theta_j^*\cdot\eta):a_j\in\mathbb R\right\}
\end{equation}
satisfies the sharp approximation estimate
\begin{equation}\label{eqn_MNW}
    \inf_{f_n\in L_n^\sigma}\|f-f_n\|_{\mathcal L^p(\mathbb S^d)}\lesssim\|f\|_{\mathcal{W}^{r,p}(\SS^d)}n^{-\frac{r}{d}}.
\end{equation}
Thus, under suitable coefficient assumptions, the analytic-activation setting already admits the sharp $\mathcal L^p(\SS^d)$ order $n^{-r/d}$. The question considered below is whether an $\mathcal H^s$ error estimate and an explicit outer-coefficient bound can be obtained simultaneously for concrete activations whose coefficients may change sign or vanish on one parity class.

More recently, \cite{liu2025achieving} showed that shallow ReLU$^k$ networks with fixed inner parameters can attain the sharp approximation rate $\mathcal O\!\left(n^{-r/d}\right)$ for functions in the Hilbert Sobolev spaces $\mathcal H^r(\Omega)$ and $\mathcal H^r(\mathbb S^d)$, provided that $r\le\frac{d+2k+1}{2}$. In the spherical setting, there exist coefficients $a_1,\dots,a_n$ satisfying
$$\Big(n\sul_{j=1}^na_j^2\Big)^{\frac12}\lesssim\|f\|_{\mathcal{H}^{r}(\SS^d)}n^{\frac{d+2k+1}{2d}-\frac{r}{d}}$$
such that \cite{liu2025achieving}
\begin{equation}\label{eqn_rate_lin_reluk}
\|f-f_n\|_{\mathcal H^s(\mathbb S^d)}\lesssim\|f\|_{\mathcal{H}^{r}(\SS^d)}n^{-\frac{r-s}{d}},\qquad 0\le s\leq k,~s\le r\le\frac{d+2k+1}{2}.
\end{equation}
This result realizes the approximation estimate with an $\mathcal H^s(\SS^d)$ norm on the left-hand side and simultaneously controls the outer coefficients, although for ReLU$^k$ the admissible Sobolev range is limited by the finite smoothness of the activation.

A key ingredient in this analysis is the precise behavior of the ultraspherical or Gegenbauer coefficients of the activations. These coefficients determine which spherical harmonic degrees can be reproduced by the ridge dictionary and control the spectral multipliers appearing in the approximation construction. Therefore, for a smooth activation $\sigma$, the first question is not directly whether the space $L_n^\sigma$ has a certain approximation power, but whether the ultraspherical coefficients of $\sigma$ can be understood with sufficient precision.

For analytic activations, this question is governed by their complex singularities. It is classical that analytic continuation to a Bernstein ellipse yields exponential decay of orthogonal-polynomial coefficients, with the largest admissible ellipse, and hence the exponential rate, determined by the nearest complex singularities; see, for example, \cite{elliott1964evaluation,zhao2013sharp}. For ultraspherical or Gegenbauer expansions, contour-integral representations involving a hypergeometric kernel were developed in \cite{cantero2012rapid} and subsequently used to obtain sharp coefficient estimates in \cite{wang2016optimal}. These contour representations provide the starting point for our spectral analysis. For the approximation construction below, the relevant information is a two-sided asymptotic formula with explicit residue dependence. Such a formula supplies a matching lower bound at the degrees where the coefficients do not vanish, retains the sign oscillation, and identifies the relevant parity subsequences.

Accordingly, our first contribution, Theorem~\ref{thm:tanh-coeff-rate}, establishes, on the relevant spectral degrees,
\begin{equation}
|\widehat{\sigma}(m)|\simeq m^{-\frac{d-1}{2}}\rho_*^{-m},
\end{equation}
for real-analytic functions whose nearest singularities are a conjugate pair of simple poles on the imaginary axis, where $\rho_*>1$ is determined by their locations. In particular, the two-sided estimate includes a matching lower bound for the ultraspherical coefficient magnitude along the relevant high-degree subsequences. The theorem also identifies the residue-dependent oscillation, while its activation-specific consequences in Theorem~\ref{thm_coeff} determine the parity patterns for the concrete examples. It applies directly to $\tanh$, the logistic sigmoid, and the rational activations considered below, while the corresponding estimate for $\arctan$ follows from a differentiation identity.

The preceding coefficient estimate also verifies, for the activations considered here, the coefficient assumptions associated with the $\mathcal L^p$ estimate \eqref{eqn_MNW} in \cite{mhaskar1999approximation}. Our objective is different: we seek an estimate of the form \eqref{eqn_rate_lin_reluk}, with the error measured in $\mathcal H^s(\SS^d)$, together with explicit normalized $\ell^2$ control of the outer coefficients. Inspired by approximation ideas in \cite{mhaskar1999approximation} and by the Hilbert-space estimates in \cite{liu2025achieving}, we develop a Hilbert-space construction that simultaneously achieves these two properties for analytic activations whose ultraspherical coefficients decay exponentially and may oscillate in sign. More precisely, for any $r>0$ and $0\le s\le r$, and every spectrally compatible target $f\in\mathcal H^r(\mathbb S^d)$, there exist coefficients $a_1,\dots,a_n$ with
$$\Big(n\sul_{j=1}^na_j^2\Big)^{\frac12}\lesssim\rho^{C_3\sqrt[d]{n}}\|f\|_{\mathcal{L}^2(\SS^d)}$$
such that
\[
\|f-f_n\|_{\mathcal H^s(\mathbb S^d)}\lesssim\|f\|_{\mathcal{H}^{r}(\SS^d)}n^{-\frac{r-s}{d}}.
\]
For the activations treated explicitly, spectral compatibility reduces to the parity conditions stated in Corollary~\ref{cor_appr_rate}. We also establish the stated analytic convergence estimate for analytic target functions. When $s=0$, the Sobolev order agrees with \eqref{eqn_MNW}; the additional conclusions are the full $\mathcal H^r$-to-$\mathcal H^s$ estimate and the simultaneous normalized $\ell^2$ coefficient bound for the oscillatory and parity-restricted spectra under consideration.

The two contributions are connected as follows: the residue analysis gives the spectral asymptotics, lower bounds along the relevant subsequences, and the nonvanishing pattern. This information then enters the Hilbert-space construction for fixed quasi-uniform directions, which gives an $\mathcal H^s$ error estimate together with explicit normalized $\ell^2$ control of the outer coefficients.

The closest comparison for analytic activation functions is \cite{mhaskar1999approximation}, which already establishes sharp Sobolev approximation orders in the corresponding $\mathcal L^p$ setting under exponentially decaying ultraspherical coefficients. Our focus is different: we derive precise coefficient asymptotics for activations with oscillatory and parity-restricted ultraspherical coefficients and use them to obtain an $\mathcal H^s(\SS^d)$ error estimate together with explicit normalized $\ell^2$ control of the outer coefficients.

Other constructions provide complementary points of comparison. The approach of \cite{petrushev1998approximation} reduces the multidimensional problem to one-dimensional approximation and employs a product-type discretization combining quadrature directions on $\mathbb S^{d-1}$ with nodes in $[-1,1]$. The manifold framework of \cite{mhaskar2020kernel} obtains the order $\mathcal O(n^{-r/d})$ under monotonicity and fast-decay assumptions on the ultraspherical coefficients, including rapid decay of the ratio $\widehat{\sigma}(B^*m)/\widehat{\sigma}(m)$ for some $B^*>1$. The sign-oscillating and partially vanishing spectra considered here are treated instead through the two-sided asymptotics and lower bounds in Theorem~\ref{thm:tanh-coeff-rate}. Thus, the Sobolev order agrees with the earlier analytic-activation result, while the present refinement concerns the $\mathcal H^s$ error, coefficient control, and the coefficient structure used to obtain them.

The remainder of the paper is organized accordingly. Section~\ref{sec_preli} collects the harmonic-analytic and geometric preliminaries and explains the spectral compatibility condition linking the two parts of the analysis. Section~\ref{sec_mainresult} presents the ultraspherical coefficient asymptotics and lower bounds along the relevant subsequences, followed by the Sobolev-norm approximation, coefficient control, and analytic approximation results. Section~\ref{sec_proof} proves these results in the same order. Section~\ref{sec_concl} concludes with several directions for further study.


\section{Preliminaries}\label{sec_preli}

This section collects the preliminaries for the two parts of the paper. The spherical harmonic decomposition supports the coefficient analysis and identifies the spectral modes generated by an activation. The geometry of quasi-uniform points and positive quadrature formulas is then used in the approximation construction.

We first introduce the notation used in this paper. We use $\NN_0=\{0,1,2,\ldots\}$ for the nonnegative integers and $\NN=\{1,2,3,\ldots\}$ for the positive integers. Following \cite{xu1992iterative}, we use $\gtrsim$, $\lesssim$, and $\simeq$ to express upper and lower bounds up to constant factors. When we write
$$
f(x)\gtrsim g(x),\quad g(x)\lesssim h(x),\quad h(x)\simeq k(x),
$$
it means that there exist positive constants $c_1,c_2,c_3,c_4$ independent of $x$ such that
$$
f(x)\ge c_1 g(x),\quad g(x)\le c_2 h(x),\quad c_3 h(x)\le k(x)\le c_4 h(x).
$$

\subsection{Spherical harmonics, ultraspherical and Gegenbauer polynomials}
We use standard notation from spherical harmonic analysis; see \cite{dai2013approximation,stein1971introduction}. Let $\SS^d=\{\eta\in\RR^{d+1}:|\eta|=1\}$, let $\omega_d$ be its surface area, and write
\[
\fint_{\SS^d}f(\eta)\,d\eta:=\frac{1}{\omega_d}\int_{\SS^d}f(\eta)\,d\eta.
\]
The corresponding $\mathcal L^2(\SS^d)$ inner product is denoted by $\langle\cdot,\cdot\rangle_{\mathcal L^2(\SS^d)}$, and the geodesic distance is $d(\eta,\theta)=\arccos(\eta\cdot\theta)$.

For $m\ge0$, let $\YY_m$ be the space of spherical harmonics of degree $m$ and choose an orthonormal basis $\{Y_{m,\ell}\}_{\ell=1}^{N(m)}$ with respect to the normalized measure. Its dimension is $N(0)=1$ and
\[
N(m)=\frac{2m+d-1}{m}\binom{m+d-2}{d-1},\qquad m\ge1.
\]
The mutually orthogonal spaces $\YY_m$ yield the decomposition $\mathcal L^2(\SS^d)=\bigoplus_{m\ge0}\YY_m$. Thus, for $f\in\mathcal L^2(\SS^d)$,
\[
f=\sum_{m=0}^\infty\Pi_mf,\qquad \Pi_mf=\sum_{\ell=1}^{N(m)}\widehat f(m,\ell)Y_{m,\ell},\qquad \widehat f(m,\ell)=\langle f,Y_{m,\ell}\rangle_{\mathcal L^2(\SS^d)},
\]
where the series converges in $\mathcal L^2(\SS^d)$.

The reproducing kernel of $\YY_m$ is zonal. We therefore normalize the degree-$m$ ultraspherical polynomial $p_m$ by the addition formula
\begin{equation}\label{eqn:sum_Y_nl}
p_m(\eta\cdot\theta)=\sum_{\ell=1}^{N(m)}Y_{m,\ell}(\eta)Y_{m,\ell}(\theta),\qquad \eta,\theta\in\SS^d.
\end{equation}
In particular, $p_m(1)=N(m)$. These polynomials are orthogonal in $\mathcal L^2_{w_d}([-1,1])$, where
\[
\langle f,g\rangle_{w_d}:=\int_{-1}^1f(t)g(t)(1-t^2)^{\frac{d-2}{2}}\,dt,\qquad \|f\|_{\mathcal L^2_{w_d}([-1,1])}:=\langle f,f\rangle_{w_d}^{1/2},
\]
and their normalization in \eqref{eqn:sum_Y_nl} gives
\begin{equation}\label{eqn:Pn_normalization}
\|p_m\|_{\mathcal L^2_{w_d}([-1,1])}^2=\frac{\omega_d}{\omega_{d-1}}N(m).
\end{equation}
Consequently, every $\sigma\in\mathcal L^2_{w_d}([-1,1])$ has the expansion
\begin{equation}\label{eqn_ultraspherical_expansion_tanh}
\sigma=\sul_{m=0}^\infty\widehat\sigma(m)p_m,\qquad \widehat\sigma(m)=\frac{\langle\sigma,p_m\rangle_{w_d}}{\|p_m\|_{\mathcal L^2_{w_d}([-1,1])}^2},
\end{equation}
with convergence in $\mathcal L^2_{w_d}([-1,1])$.

We measure Sobolev regularity and spectral analyticity through the same spherical harmonic decomposition.
\begin{definition}
\begin{enumerate}
    \item For $\rr\ge0$, define
    \[
    \mH:=\{f\in\mathcal L^2(\SS^d):\|f\|_{\mH}<\infty\},
    \]
    where
    \begin{equation}\label{eqn:Sob_norm_Parseval}
    \|f\|_{\mH}^2=\sul_{m=0}^\infty\sul_{\ell=1}^{N(m)}(m^{2\rr}+1)|\widehat f(m,\ell)|^2.
    \end{equation}
    We identify $\mathcal H^0(\SS^d)$ with $\mathcal L^2(\SS^d)$.
    \item For $\rho>1$, define
    \[
    \mathrm{Hol}_\rho(\SS^d):=\{f\in\mathcal L^2(\SS^d):\|f\|_{\mathrm{Hol}_\rho(\SS^d)}<\infty\},
    \]
    where
    \begin{equation}
    \|f\|_{\mathrm{Hol}_\rho(\SS^d)}^2=\sul_{m=0}^\infty\sul_{\ell=1}^{N(m)}(\rho^{2m}+1)|\widehat f(m,\ell)|^2.
    \end{equation}
\end{enumerate}
\end{definition}

Throughout the paper we work with the ultraspherical polynomials ${p_m}$ introduced above, together with our normalization. However, the auxiliary results we quote from \cite{wang2016optimal} are formulated for a differently normalized family called Gegenbauer polynomials. For the reader's convenience, we briefly recall their definition and record the conversion between the two normalizations. All quoted statements are then translated into our notation through this relation.

For a fixed $\lambda>-\frac{1}{2}$ and $\lambda\neq 0$, the Gegenbauer polynomials are orthogonal over the interval $[-1, 1]$ with respect to the weight function $w(t)=(1-t^2)^{\lambda-\frac{1}{2}}$ and
\begin{equation}
    \int_{-1}^1 (1-t^2)^{\lambda-\frac{1}{2}}C_m^{(\lambda)}(t)C_n^{(\lambda)}(t)\,dt = h_m^{(\lambda)}\delta_{mn}, 
\end{equation} 
where $\delta_{mn}$ is the Kronecker delta and 
\begin{equation}
    h_m^{(\lambda)} = \frac{2^{1-2\lambda}\pi}{\Gamma(\lambda)^2}\frac{\Gamma(m+2\lambda)}{\Gamma(m+1)(m+\lambda)}, \quad \lambda\neq 0.
\end{equation}
When $d\geq 2$, by selecting $\lambda=\frac{d-1}{2}$, the general Gegenbauer polynomials are ultraspherical polynomials up to multiplicative constants. 

To distinguish the coefficients associated with the two bases, denote 
$$\widehat{\sigma}_\ast(m) = \frac{\lt< \sigma,C_m^{(\lambda)}\rt>_{w_d}}{\|C_m^{(\lambda)}\|_{\mathcal{L}_{w_d}^2([-1,1])}^2},\quad \widehat{\sigma}(m) = \frac{\lt< \sigma,p_m\rt>_{w_d}}{\|p_m\|_{\mathcal{L}_{w_d}^2([-1,1])}^2}.$$
The relation between $\widehat{\sigma}_\ast(m)$ and $\widehat{\sigma}(m)$ is
\begin{equation}\label{eqn:coeff-relation}
\begin{split}
 q_{m,d} &:= \frac{\widehat{\sigma}(m)}{\widehat{\sigma}_\ast(m)} = \frac{\|C_m^{(\lambda)}\|_{\mathcal{L}_{w_d}^2([-1,1])}}{\|p_m\|_{\mathcal{L}_{w_d}^2([-1,1])}} \\
     &= \left( \frac{2^{2-d}\pi}{\Gamma(\frac{d-1}{2})^2}\frac{\Gamma(m+d-1)}{\Gamma(m+1)(m+\frac{d-1}{2})} \bigg/ \frac{\omega_d}{\omega_{d-1}}\frac{2m+d-1}{m}\binom{m+d-2}{d-1} \right)^{1/2} \\
     &= \lt(\frac{2^{1-d}\,\pi\,\Gamma(d)\,\omega_{d-1}}{\omega_d}\rt)^{\frac{1}{2}}\frac{1}{\Gamma(\frac{d-1}{2})}\,\lt(m+\frac{d-1}{2}\rt)^{-1}.
     \end{split}
\end{equation}

Now we record the precise link between the coefficient and approximation parts of the paper. Combining \eqref{eqn_ultraspherical_expansion_tanh} and \eqref{eqn:sum_Y_nl}, every $f_n\in L_n^\sigma$ has the $\mathcal{L}^2$-spherical expansion
\begin{equation}\label{eqn_fn_expans}
    \begin{split}
        f_n(\eta)=&\sul_{j=1}^na_j\sigma(\theta_j^*\cdot\eta)=\sul_{j=1}^na_j\sul_{m=0}^\infty\widehat{\sigma}(m)p_m(\theta_j^*\cdot\eta)=\sul_{j=1}^na_j\sul_{m=0}^\infty\widehat{\sigma}(m)\sul_{\ell=1}^{N(m)}Y_{m,\ell}(\theta_j^*)Y_{m,\ell}(\eta)\\
        =&\sul_{m=0}^\infty\widehat{\sigma}(m)\sul_{\ell=1}^{N(m)}\Big(\sul_{j=1}^na_jY_{m,\ell}(\theta_j^*)\Big)Y_{m,\ell}(\eta).
    \end{split}
\end{equation}
It follows immediately that if $\Pi_mf\neq0$ for some $m$ satisfying $\widehat{\sigma}(m)=0$, then
$$\|f-f_n\|_{\mathcal{H}^s(\SS^d)}^2\geq\|\Pi_mf-\Pi_mf_n\|_{\mathcal{H}^s(\SS^d)}^2=\|\Pi_mf\|_{\mathcal{H}^s(\SS^d)}^2>0.$$
Hence, a necessary condition for
\begin{equation*}
    \lim\limits_{n\to\infty}\inf\limits_{f_n\in L_n^\sigma}\|f-f_n\|_{\mathcal H^s(\mathbb S^d)}=0
\end{equation*}
is that $\Pi_mf=0$ whenever $\widehat{\sigma}(m)=0$. Thus the nonvanishing set of the activation coefficients is the interface between the two parts of the analysis: the first part identifies this set for concrete activations, and the second approximates targets supported on it. We formulate the required compatibility as follows.
\begin{assumption}[Spectral compatibility]\label{assum_nece}
$\Pi_mf=0$ for all $m\notin E_\sigma$, where
\begin{equation}
    E_\sigma:=\Big\{m:~\widehat{\sigma}(m)\neq0\Big\}.
\end{equation}
\end{assumption}

For the activations used below, this condition takes a simple parity form. For example,
\begin{enumerate}
    \item if $\sigma=\tanh$, then $E_\sigma\subset2\NN_0+1$, and $f_n$ is necessarily an odd function. Any function with
    $$f(-x)+f(x)\neq0$$
    cannot be approximated by $L_n^\sigma$.
    \item if $\sigma(t)=\frac{1}{1+t^2}$, then $E_\sigma\subset2\NN_0$, and $f_n$ is necessarily an even function. Any function with
    $$f(-x)-f(x)\neq0$$
    cannot be approximated by $L_n^\sigma$.
\end{enumerate}

\subsection{Scattered points and polynomial quadrature}\label{subsec:prop_ultraspherical}
In this subsection, we fix a finite collection of pairwise distinct points 
$\{\theta_j^*\}_{j=1}^n \subset \mathbb{S}^{d}$, which are assumed to be scattered on the sphere. 
We measure the density of this set by its mesh norm
\begin{equation}
h = \max_{\eta\in\SS^d}\min_{1\le j\le n}d(\eta,\theta_j^*).
\end{equation}
This quantity represents the largest geodesic distance from any point on $\SS^d$ to the nearest sampling point.

Related Marcinkiewicz--Zygmund and quadrature constructions on general manifolds were developed in \cite{filbir2011marcinkiewicz}, while localized polynomial operators based on scattered spherical data and quadrature were studied in \cite{leGiaMhaskar2009localized}.

Positive quadrature formulas with polynomial exactness and controlled weights for sufficiently dense scattered point sets on $\SS^d$ follow from \cite[Corollary~4.4]{narcowich2006localized}. We use the following consequence.

\begin{lemma}[{\cite[Corollary~4.4]{narcowich2006localized}}]\label{lem:quadrature}
There exist nonnegative weights $\tau_1,\dots,\tau_n$ with $\tau_j\lesssim h^d$, and a constant $C_2$ independent of $n$ and $h$, such that
\begin{equation}\label{eqn:quadrature_formula}
    \fint_{\SS^d}p(\eta)d\eta=\sul_{j=1}^n\tau_jp(\theta_j^*),\quad \forall p\in\mathbb{P}_{2J}(\SS^d),
\end{equation}
where $J=\lfloor C_2h^{-1}\rfloor$. Moreover, if the separation distance satisfies
\begin{equation}
    \mil_{i\neq j}d(\theta_i^*,\theta_j^*)\gtrsim h,
\end{equation}
then
\begin{equation}
    \tau_j\simeq h^d\simeq n^{-1},\qquad j=1,\dots,n.
\end{equation}
All of the corresponding constants are independent of $n$ and $\{\theta_j^*\}_{j=1}^n$.
\end{lemma}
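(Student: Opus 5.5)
The lemma is quoted from \cite[Corollary~4.4]{narcowich2006localized}, so the plan is to show how it follows from a Marcinkiewicz--Zygmund (MZ) inequality together with a duality argument. The first step is a norming-set estimate. Fix a Voronoi-type partition $\{R_j\}_{j=1}^n$ of $\SS^d$ with $\theta_j^*\in R_j$ and $\operatorname{diam}R_j\le 2h$. For $p\in\mathbb P_{L}(\SS^d)$ with $L\le C h^{-1}$ and $C$ small, use the Bernstein inequality $\|\nabla p\|_\infty\lesssim L\|p\|_\infty$ to obtain $|p(\eta)-p(\theta_j^*)|\lesssim Lh\|p\|_\infty$ for $\eta\in R_j$. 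This gives
\[
\Big|\fint_{\SS^d}|p|\,d\eta-\sul_j\frac{|R_j|}{\omega_d}|p(\theta_j^*)|\Big|\le \tfrac12\fint_{\SS^d}|p|\,d\eta .
\]
The $L^\infty$ bound on the gradient has to be converted to an $L^1$-type bound. The cleanest route is to use a localized polynomial reproducing kernel, or the Nikolskii/Bernstein inequalities in $L^1$ via a maximal-function argument, as Narcowich--Petrushev--Ward do. The outcome is the lower MZ bound
\[
\fint|p|\lesssim \sul_j h^d|p(\theta_j^*)|\qquad\text{for } p\in\mathbb P_{2J}.
\]

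The second step is the duality (Hahn--Banach/Farkas) construction of the weights. Consider the sampling map $S:\mathbb P_{2J}\to\RR^n$, $p\mapsto(p(\theta_j^*))_j$, with the weighted $\ell^1$ norm $\|v\|:=\sum_j h^d|v_j|$. The MZ bound makes $S$ injective with $\|p\|_{L^1}\lesssim\|Sp\|$. The functional $Sp\mapsto\fint p$ is then bounded by a constant on $S(\mathbb P_{2J})$, so it extends to all of $\RR^n$ with the same bound. It is represented by weights $\tau_j$ with $\max_j|\tau_j|/h^d\lesssim 1$, and this gives exactness together with $|\tau_j|\lesssim h^d$.

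Positivity is the main obstacle. Following the standard approach (Mhaskar--Narcowich--Ward), I would apply the extension argument to the ordered space. Apply MZ to $|p|$ approximated from above by a nonnegative polynomial of degree $2J'$, with $J'$ a constant multiple of $J$. Then show that $\fint p\ge0$ whenever $Sp\ge0$ componentwise, up to a controlled error. With the degree-doubling trick this yields a positive extension via the Krein/Riesz extension theorem, hence nonnegative $\tau_j\lesssim h^d$. Adjusting the constant $C_2$ absorbs the degree loss, giving $J=\lfloor C_2h^{-1}\rfloor$.

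Finally, under the separation assumption $\min_{i\ne j}d(\theta_i^*,\theta_j^*)\gtrsim h$, the lower bound $\tau_j\gtrsim h^d$ comes from testing the quadrature on a nonnegative localized polynomial $\Phi_j$ of degree $2J$ concentrated at $\theta_j^*$, for instance a normalized power of a Fejér-type zonal kernel. For such $\Phi_j$ we have $\Phi_j(\theta_j^*)\simeq J^d$, $\fint\Phi_j\simeq 1$, and rapid decay away from $\theta_j^*$. Separation lets us sum the tails over $i\neq j$ and bound them by a small fraction of $\fint\Phi_j$, provided the kernel's localization constant is chosen suitably. This gives $\tau_j J^d\gtrsim 1$, so $\tau_j\gtrsim h^d$. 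Separation together with mesh norm $h$ also gives $n\simeq h^{-d}$ by a volume count of disjoint caps of radius $\simeq h$ and covering caps of radius $h$. All constants depend only on $d$ and the separation constant.
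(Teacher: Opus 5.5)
The paper gives no proof of this lemma; it is quoted from \cite[Corollary~4.4]{narcowich2006localized}, so your reconstruction has to stand on its own. Its MZ and positive-extension steps follow the standard route. The final step, the lower bound $\tau_j\gtrsim h^d$, has a genuine gap.

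\textbf{Why the kernel test fails for the lower bound.} Your test polynomial $\Phi_j$ has degree $2J$ with $J=\lfloor C_2h^{-1}\rfloor$, and your MZ step forces $C_2$ to be small. A nonnegative polynomial of degree $2J$ with $\fint\Phi_j\simeq1$ and $\Phi_j(\theta_j^*)\simeq J^d$ stays $\gtrsim J^d$ on a cap of radius $\simeq 1/J\simeq h/C_2$ around $\theta_j^*$ (by Bernstein plus Nikolskii). That cap contains $\simeq C_2^{-d}$ nodes. Separation and the mesh norm act on the scale $h$, not $1/J$, so they do not help here. The only tail bound available to you is $\sum_{i\neq j}\tau_i\Phi_j(\theta_i^*)\lesssim h^d\sum_{i\neq j}\Phi_j(\theta_i^*)$. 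This is of order $h^dJ^dC_2^{-d}\simeq1$, the same order as $\fint\Phi_j$ and not a small fraction of it, whatever localization constant you choose.

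The failure is structural, not a matter of constants. Your argument uses only exactness on $\mathbb P_{2J}(\SS^d)$, nonnegativity, $\tau_i\lesssim h^d$ and separation, and these do not imply a lower bound:
\begin{itemize}
\item The nonnegative weight vectors exact on $\mathbb P_{2J}(\SS^d)$ form a polytope, bounded because $\sum_j\tau_j=1$.
\item The sampling matrix has rank $\dim\mathbb P_{2J}(\SS^d)$. Hence every vertex of the polytope is supported on at most $\dim\mathbb P_{2J}(\SS^d)\simeq(2C_2)^dh^{-d}$ nodes.
\item For small $C_2$ this is fewer than $n\simeq h^{-d}$. So there are admissible rules in which a positive fraction of the weights vanish, and these rules still satisfy $\tau_i\lesssim h^d$ (shown below).
\end{itemize}
So the \emph{moreover} clause cannot be deduced after the fact. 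In particular, a positive extension produced by the Krein/Riesz theorem need not satisfy it.

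\textbf{How to repair it.} The lower bound has to be built into the construction. Let $R_j$ be the Voronoi cells and $\mu_j=|R_j|/\omega_d$. Under separation, $R_j$ contains the cap of radius $\frac12\min_{i\neq j}d(\theta_i^*,\theta_j^*)$ and lies in the cap of radius $h$, so $\mu_j\simeq h^d$.

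Seek $\tau_j=\frac12\mu_j+v_j$ with $v_j\ge0$. Here $(v_j)$ represents a positive extension to $\RR^n$ of the functional $Sp\mapsto\fint p-\frac12\sum_j\mu_jp(\theta_j^*)$ on $S(\mathbb P_{2J}(\SS^d))$. This functional is well defined because $S$ is injective. It is also positive:
\begin{itemize}
\item The oscillation bound behind your Step~1, $\sum_j\frac1{\omega_d}\int_{R_j}|p-p(\theta_j^*)|\le\epsilon\fint|p|$, gives $\fint|p|\le(1-\epsilon)^{-1}\sum_j\mu_j|p(\theta_j^*)|$.
\item If $p(\theta_j^*)\ge0$ for all $j$, this yields $\fint p-\frac12\sum_j\mu_jp(\theta_j^*)\ge\bigl(\frac12-\frac{\epsilon}{1-\epsilon}\bigr)\sum_j\mu_jp(\theta_j^*)\ge0$ for $\epsilon\le\frac13$.
\end{itemize}
Since $S(1)=(1,\dots,1)$ is interior to $\RR^n_{\ge0}$, the M.~Riesz extension theorem applies. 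The positivity is exact, so no ``controlled error'' or degree doubling is needed, and $\tau_j\ge\frac12\mu_j\gtrsim h^d$.

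Conversely, the upper bound does not come out of the extension theorem. This is where your kernel test belongs, run in the opposite direction. Take $\Phi_j(\eta)=\bigl(\sum_{m\le J}p_m(\eta\cdot\theta_j^*)\bigr)^2/\dim\mathbb P_J(\SS^d)$. It is a nonnegative element of $\mathbb P_{2J}(\SS^d)$ with $\fint\Phi_j=1$ and $\Phi_j(\theta_j^*)=\dim\mathbb P_J(\SS^d)$. Any nonnegative rule exact on $\mathbb P_{2J}(\SS^d)$ therefore satisfies $\tau_j\Phi_j(\theta_j^*)\le\sum_i\tau_i\Phi_j(\theta_i^*)=1$. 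That is, $\tau_j\le(\dim\mathbb P_J(\SS^d))^{-1}\simeq J^{-d}\simeq h^d$.
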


In this paper, we assume the collection $\{\theta_j^*\}_{j=1}^n \subset \SS^d$ is quasi-uniform. In this case, $h\simeq n^{-\frac{1}{d}}$.
\begin{definition}[Quasi-uniform]
    Let $d\in\NN$. A set of points $\{\theta_j^*\}_{j=1}^n\subset\SS^d$ is said to be quasi-uniform if
    \begin{equation}\label{eqn:quasiuniform}
        \mal_{\theta\in\SS^d}\min\limits_{1\leq j\leq n}d(\theta,\theta_j^*)\lesssim\min\limits_{i\neq j}d(\theta_i^*,\theta_j^*).
    \end{equation}
    The corresponding constants are independent of $n$.
\end{definition}

\section{Main Results}\label{sec_mainresult}

This section presents the two connected parts of the analysis. Section~\ref{subsec_coeff} derives ultraspherical coefficient asymptotics from a conjugate pair of simple poles and then specializes them to several smooth activations. In particular, the resulting two-sided estimates provide the lower bounds needed to invert the corresponding nonzero spectral multipliers. Inspired by earlier approximation constructions for quasi-uniform parameters on the sphere, Section~\ref{subsec:approx-rates} develops a Hilbert-space argument that yields an approximation estimate with an $\mathcal H^s$ norm on the left-hand side together with explicit normalized $\ell^2$ control of the outer coefficients.

\subsection{Ultraspherical coefficient asymptotics from conjugate imaginary poles}\label{subsec_coeff}

Analyticity in a Bernstein ellipse gives an exponential upper bound for ultraspherical coefficients, but the later approximation construction requires a matching lower bound at the degrees where the coefficients do not vanish, together with the leading oscillation and the relevant nonvanishing subsequences, because the low-frequency matching step divides by the activation coefficients. The following theorem derives the corresponding asymptotic formula with explicit residue dependence for a conjugate pair of simple poles on the imaginary axis.
\begin{theorem} \label{thm:tanh-coeff-rate}
Suppose that $\sigma$ is real on $[-1,1]$ and analytic inside and on the Bernstein ellipse
$$\mathcal{E}_R:= \{\frac{1}{2}(R e^{i\theta} + R^{-1}e^{-i\theta})\mid \theta\in[0,2\pi)\}$$
except for two simple poles $z_\pm$ in its interior. Assume that $z_\pm$ lie on $i\RR$, that $\im \ z_+>0$, and that the residues of $\sigma$ at $z=z_\pm$ satisfy $\res(\sigma,z_\pm)\neq 0$. Then, as $m\to+\infty$,  
\begin{equation} \label{eqn:coeff-asymptotic}
    \widehat{\sigma}(m)=\mathcal O(R^{-m})-I_+(m)-I_-(m)
\end{equation}
where
\begin{equation*}
     I_+(m)+I_-(m)=\re\big(\res(\sigma,z_+)(-i)^{m+1}\big) \rho_\ast^{-m}\left\{\begin{array}{ll}
        C_1\,\rho_\ast^{-1}\int_0^1t^m(1-t)^{\lambda-1}(1+\frac{t}{\rho_\ast^2})^{\lambda-1}\,dt ,&\ d\geq 2, \\[6pt]
        4(\rho_\ast + \rho_\ast^{-1})^{-1}, &\ d=1.
    \end{array}\right.
\end{equation*}
with
$\rho_\ast = \lt|z_\pm + \sqrt{z_\pm^2-1}\rt| = |z_\pm| + \sqrt{|z_\pm|^2+1}$, $\lambda=\frac{d-1}{2}$ and
$$C_1=C_1(d)=4\lt(\frac{2^{1-d}\,\pi\,\Gamma(d)\,\omega_{d-1}}{\omega_d}\rt)^{\frac{1}{2}}\frac{1}{\Gamma(\frac{d-1}{2})}.$$
Moreover, there exists $M=M(\sigma)$ such that for $m\in F_{\sigma,M}$
\begin{equation} \label{eqn:coeff-bound}
    |\widehat{\sigma}(m)|\simeq m^{-\frac{d-1}{2}}\rho_\ast^{-m},
\end{equation}

where
\begin{equation}\label{eqn:coeff-nonzero-set}
    F_{\sigma,M} := \left\{ \begin{aligned}
        \{m:~m\geq M,~m\equiv 1\mod 2\}, &\quad \res(\sigma,z_\pm)\in \RR,\\
        \{m:~m\geq M,~m\equiv 0\mod 2\}, &\quad \res(\sigma,z_\pm)\in i\RR,\\
        \{m:~m\geq M\}, &\quad  \res(\sigma,z_\pm)\notin \RR\cup i\RR.
    \end{aligned}\right.
\end{equation}

\end{theorem}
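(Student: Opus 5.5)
We start from the contour-integral representation of Gegenbauer coefficients in \cite{cantero2012rapid,wang2016optimal}. For $\sigma$ analytic inside and on $\mathcal E_R$ apart from isolated poles, one has
\[
\widehat\sigma_\ast(m)=\frac{1}{2\pi i}\oint_{\mathcal E_R}\sigma(z)\,\Phi_m(z)\,dz-\sum_{\pm}\res\big(\sigma\,\Phi_m,z_\pm\big),
\]
where $\Phi_m$ is the second-kind kernel associated with $C^{(\lambda)}_m$. It is expressible through a hypergeometric function ${}_2F_1$ in the variable $u^{-2}$, with $u=z+\sqrt{z^2-1}$, and carries the prefactor $u^{-m-1}$ up to constants. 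Since the poles are simple, each residue is $\res(\sigma,z_\pm)\Phi_m(z_\pm)$.

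On $\mathcal E_R$ we have $|u|=R$. The standard bound on the hypergeometric factor, uniform in $m$, then makes the contour term $\mathcal O(R^{-m})$ times a polynomial factor. We absorb that factor by shrinking $R$ slightly while keeping the poles inside, which gives the $\mathcal O(R^{-m})$ term in \eqref{eqn:coeff-asymptotic}. Multiplying by $q_{m,d}$ from \eqref{eqn:coeff-relation} converts everything to $\widehat\sigma(m)$; this is where the factor $(m+\lambda)^{-1}$ and the constant in $C_1$ come from.

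The key computation is evaluating $\Phi_m(z_\pm)$ explicitly. For $z_\pm=\pm i\beta$ with $\beta>0$, we get $u_\pm=\pm i(\beta+\sqrt{\beta^2+1})=\pm i\rho_\ast$, so $u_\pm^{-m-1}=(\mp i)^{m+1}\rho_\ast^{-m-1}$. Thus $u_\pm^{-2}=-\rho_\ast^{-2}$, and the hypergeometric argument is negative real. We then use Euler's integral representation
\[
{}_2F_1(a,b;c;x)=\frac{\Gamma(c)}{\Gamma(b)\Gamma(c-b)}\int_0^1t^{b-1}(1-t)^{c-b-1}(1-xt)^{-a}\,dt,
\]
with suitable parameters and the substitution $t\mapsto$ (power of) $t$. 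This should produce $\int_0^1t^m(1-t)^{\lambda-1}(1+t/\rho_\ast^2)^{\lambda-1}dt$, with the Gamma prefactors cancelling against $q_{m,d}$ up to $C_1$.

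Since $\sigma$ is real on $[-1,1]$, Schwarz reflection gives $\res(\sigma,z_-)=\overline{\res(\sigma,z_+)}$. The two pole contributions are therefore complex conjugates, and their sum is $2\re\big(\res(\sigma,z_+)(-i)^{m+1}\big)$ times the common real factor. For $d=1$, the Chebyshev case, the kernel is elementary: it is $\frac{u^{-m}}{\sqrt{z^2-1}}$ up to normalization. Using $\sqrt{z_\pm^2-1}=\pm\frac{i}{2}(\rho_\ast+\rho_\ast^{-1})$ gives the factor $4(\rho_\ast+\rho_\ast^{-1})^{-1}$ directly.

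For \eqref{eqn:coeff-bound}, we first note that Laplace's method at $t=1$ gives
\[
\int_0^1t^m(1-t)^{\lambda-1}(1+t\rho_\ast^{-2})^{\lambda-1}dt\simeq\Gamma(\lambda)(1+\rho_\ast^{-2})^{\lambda-1}m^{-\lambda}.
\]
It remains to show $|\re(\res(\sigma,z_+)(-i)^{m+1})|$ is bounded below on $F_{\sigma,M}$. Write $\res(\sigma,z_+)=re^{i\phi}$. The quantity is $r|\cos(\phi-\tfrac{\pi}{2}(m+1))|$, which takes only four values periodic in $m$:
\begin{itemize}
\item if $\phi\in\{0,\pi\}$, it vanishes for even $m$ and equals $r$ for odd $m$;
\item if $\phi\in\{\pm\pi/2\}$, the parities are reversed;
\item otherwise all four values are positive.
\end{itemize}
Hence the minimum over the relevant class is a positive constant. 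Because $R>\rho_\ast$, the term $\mathcal O(R^{-m})$ is $o(m^{-\lambda}\rho_\ast^{-m})$, and choosing $M$ large yields the two-sided bound.

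The main obstacle is the exact kernel identification: matching the normalization of $\Phi_m$ in \cite{wang2016optimal} and turning the hypergeometric value at $-\rho_\ast^{-2}$ into the stated Euler integral with precisely the constant $C_1$. A uniform bound of the kernel on the contour is also required. We would verify the formula against a known case, such as $d=3$ where $C^{(1)}_m=U_m$ is elementary, to fix the constants.
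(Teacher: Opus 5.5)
Your proposal is correct and follows essentially the paper's route: the Cantero--Wang contour representation, deformation to $\mathcal E_R$ picking up the residues at $z_\pm$ with $u_\pm=\pm i\rho_\ast$, Euler's integral at the argument $-\rho_\ast^{-2}$, conjugate symmetry of the two pole terms, and the parity pattern of $\re\big(\res(\sigma,z_+)(-i)^{m+1}\big)$. The remaining differences are cosmetic: for $d=1$ you use the Chebyshev Cauchy kernel $u^{-m}/\sqrt{z^2-1}$ where the paper shifts a rectangular contour in the angle variable, and you use Laplace's method where the paper compares with a Beta integral.

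One small correction: no polynomial factor needs absorbing, because after multiplying by $q_{m,d}$ the prefactor is $c_{m,d}\simeq m^{-\lambda}$, which is bounded. Shrinking $R$ would in any case only give the weaker $\mathcal O(R'^{-m})$ with $R'<R$; if a margin were needed, you should instead enlarge $R$ slightly, using analyticity on a neighborhood of the closed ellipse.
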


The quantity $\rho_\ast$ is the Bernstein-ellipse parameter associated with the two poles, whereas the term $\mathcal O(R^{-m})$ comes from a larger ellipse and is therefore of strictly smaller exponential order. The phase of the residue determines which parity subsequence carries the leading term. The theorem is deliberately stated for an imaginary conjugate pair, which is the configuration shared by the activations considered here; more general pole configurations would require a separate analysis of possible interference among several oscillatory residue contributions.

Theorem~\ref{thm:tanh-coeff-rate} applies directly to the first four activations listed below. The $\arctan$ case does not follow directly from the pole theorem, because $\arctan$ has logarithmic branch points at $\pm i$; it is obtained instead from the rational case by differentiating its Gegenbauer expansion.
\begin{theorem}\label{thm_coeff}
    \begin{enumerate}
        \item Let $\sigma=\tanh$, then $E_\sigma=2\NN_0+1$, and
    \begin{equation}
    \begin{split}
        &(-1)^{\frac{m-1}{2}}\widehat{\sigma}(m)\simeq m^{-\frac{d-1}{2}}\rho_1^{-m},\quad m\in E_\sigma,
    \end{split}
\end{equation}
where $\rho_1=\frac{\pi}{2}+\sqrt{\frac{\pi^2}{4}+1}$.
\item Let $\sigma(t)=\frac{1}{1+e^{-t}}$, then $E_\sigma=\{0\}\cup(2\NN_0+1)$, and
    \begin{equation}
        \begin{split}
            &(-1)^{\frac{m-1}{2}}\widehat{\sigma}(m)\simeq m^{-\frac{d-1}{2}}\rho_2^{-m},\quad m\in 2\NN_0+1,
        \end{split}
    \end{equation}
    where $\rho_2=\pi+\sqrt{\pi^2+1}$.
\item Let $\sigma(t) = \frac{1}{t^2+1}$, then $E_\sigma=2\NN_0$, and
    \begin{equation}\label{eqn_coef_est_3}
        \begin{split}
            &(-1)^{\frac{m}{2}}\widehat{\sigma}(m)\simeq m^{-\frac{d-1}{2}}\rho_3^{-m},\quad m\in E_\sigma,
        \end{split}
    \end{equation}
where $\rho_3=1+\sqrt{2}$.
\item Let $\sigma(t) = \frac{t}{t^2+1}$, then $E_\sigma=2\NN_0+1$, and
    \begin{equation}
        \begin{split}
            &(-1)^{\frac{m-1}{2}}\widehat{\sigma}(m)\simeq m^{-\frac{d-1}{2}}\rho_3^{-m},\quad m\in E_\sigma,
        \end{split}
    \end{equation}
        \item Let $\sigma(t)=\arctan(t)$,
        then $E_\sigma=2\NN_0+1$, and
        \begin{equation}\label{eqn_arctan_coeff}
        \begin{split}
            &(-1)^{\frac{m-1}{2}}\widehat{\sigma}(m)\simeq m^{-\frac{d+1}{2}}\rho_3^{-m},\quad m\in E_\sigma.
        \end{split}
    \end{equation}
    \end{enumerate}
\end{theorem}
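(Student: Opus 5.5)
Items 1--4 of Theorem~\ref{thm_coeff} follow from Theorem~\ref{thm:tanh-coeff-rate}, once we locate the nearest poles, compute their residues, check that all other singularities lie outside a strictly larger Bernstein ellipse, and fix the sign of the leading term. The vanishing pattern comes from parity: for odd $\sigma$ we have $\widehat\sigma(m)=0$ for even $m$, and for even $\sigma$ we have $\widehat\sigma(m)=0$ for odd $m$, since $p_m(-t)=(-1)^mp_m(t)$. The lower bound \eqref{eqn:coeff-bound} holds only for $m\ge M$. Nonvanishing for the finitely many $m<M$ in $E_\sigma$ has to be checked separately, and the sign claims $(-1)^{\frac{m-1}{2}}\widehat\sigma(m)>0$ there as well. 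I would get these from positivity arguments rather than from the asymptotics. One route is the Rodrigues formula, which gives $\widehat\sigma(m)\propto\int_{-1}^1\sigma^{(m)}(t)(1-t^2)^{m+\lambda-1/2}dt$. Another is an explicit series, e.g.\ for $\frac1{1+t^2}$, where $(-1)^k t^{2k}$ has nonnegative Gegenbauer coefficients, so the sign alternation can be read off. Failing that, I would use an explicit representation of the coefficient.

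The pole data are as follows. For $\tanh$ the nearest poles are $z_\pm=\pm i\pi/2$, both with residue $1$, which is real. The next poles are at $\pm 3i\pi/2$, so any $R$ with $\rho_1<R<3\pi/2+\sqrt{9\pi^2/4+1}$ works. Then
\[
\re\big((-i)^{m+1}\big)=\cos\big(\tfrac{(m+1)\pi}{2}\big)=(-1)^{\frac{m+1}{2}} \quad\text{for odd } m.
\]
Because the paper's formula carries a minus sign, $\widehat\sigma(m)\approx -I_+-I_-$ has sign $(-1)^{\frac{m-1}{2}}$, since the integral factor is positive.

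For the logistic function, write $\sigma(t)=\frac12+\frac12\tanh(t/2)$. The poles are at $\pm i\pi$ with residue $1$, which gives $\rho_2$. The constant $\frac12$ contributes only to $m=0$, so $E_\sigma=\{0\}\cup(2\NN_0+1)$.

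For $\frac1{1+t^2}$ the poles are $\pm i$ with residues $\mp\frac i2$, which lie in $i\RR$. Then
\[
\re\big(-\tfrac i2(-i)^{m+1}\big)=\re\big(\tfrac12(-i)^{m+2}\big)=\tfrac12(-1)^{\frac m2+1},
\]
which gives the sign $(-1)^{m/2}$. There are no other singularities, so $R$ is arbitrary. For $\frac t{1+t^2}$ the residues are $\frac12$, which is real, and the same computation as for $\tanh$ applies. In each case $\rho_\ast=|z|+\sqrt{|z|^2+1}$ gives the stated value.

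I also need the magnitude of the integral factor,
\[
\int_0^1t^m(1-t)^{\lambda-1}\big(1+t/\rho_\ast^2\big)^{\lambda-1}dt\simeq\Gamma(\lambda)\,m^{-\lambda},
\]
which follows by Laplace's method near $t=1$. This yields $m^{-\frac{d-1}{2}}$, and the $d=1$ case is trivially constant.

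For $\arctan$ (item 5) I would use $\arctan'=\frac1{1+t^2}$ together with the Gegenbauer derivative identity
\[
\frac{d}{dt}\big[C_{m+1}^{(\lambda)}-C_{m-1}^{(\lambda)}\big]=2(m+\lambda)C_m^{(\lambda)}.
\]
Equivalently, write $\arctan$'s coefficients in the basis $C^{(\lambda)}$, differentiate termwise, and use $\frac{d}{dt}C_{n}^{(\lambda)}=2\lambda C_{n-1}^{(\lambda+1)}$. In practice the cleanest route is to integrate by parts: pairing $\arctan$ against $C_m^{(\lambda)}w$ and using $(1-t^2)^{\lambda-1/2}C_m^{(\lambda)}=c_m\frac{d}{dt}\big[(1-t^2)^{\lambda+1/2}C_{m-1}^{(\lambda+1)}\big]$ with $c_m\simeq m^{-1}$, the coefficient of $\arctan$ in dimension $d$ becomes $\simeq m^{-1}$ times the dimension-$(d+2)$ coefficient of $\frac1{1+t^2}$ at degree $m-1$. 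There are no boundary terms, since the weight vanishes at $\pm1$. After the normalization \eqref{eqn:coeff-relation}, item 3 in dimension $d+2$, at the even degree $m-1$, gives $m^{-\frac{d+1}{2}}\rho_3^{-(m-1)}$ up to a factor of order $m^{-1}$ times ratio-of-normalization factors. The sign is $(-1)^{\frac{m-1}{2}}$.

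\textbf{Main obstacle.} The main difficulty is the exact bookkeeping of powers of $m$ in this last step, specifically combining the $m^{-1}$ from integration by parts with the change of $\lambda$ and the $q_{m,d}$ factors. This bookkeeping has to land on the exponent $\frac{d+1}{2}$. Verifying the signs and nonvanishing at small degrees is the other delicate point.
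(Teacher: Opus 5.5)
Your large-$m$ analysis agrees with the paper. The pole locations, the residues, and the phase computations ($\re\big((-i)^{m+1}\big)=(-1)^{\frac{m+1}{2}}$ for odd $m$, and $\re\big(-\tfrac i2(-i)^{m+1}\big)=\tfrac12(-1)^{\frac m2+1}$ for even $m$) give the stated signs for $m\ge M$.

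The genuine gap is the step you postpone. The statement asserts $E_\sigma=2\NN_0+1$ (resp.\ $2\NN_0$) together with a two-sided bound on \emph{all} of $E_\sigma$. So you need $\widehat\sigma(m)\neq0$, with the stated sign, for every $m<M$ of the right parity, and neither of your proposed tools delivers this:
\begin{itemize}
\item The Rodrigues formula determines the sign directly only when $\sigma^{(m)}$ has constant sign on $[-1,1]$. This already fails at $m=3$: $\tanh'''(t)=\mathrm{sech}^2t\,(6\tanh^2t-2)$ changes sign at $t\approx0.66$.
\item For $\frac1{1+t^2}$ you get $\widehat\sigma(2j)=\sum_{k\ge j}(-1)^kc_{k,j}$, where $c_{k,j}\ge0$ is the coefficient of $p_{2j}$ in $t^{2k}$. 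Nonnegativity alone does not fix the sign of this alternating sum. The $c_{k,j}$ are not even decreasing in $k$: for $d=2$, $c_{j+1,j}/c_{j,j}=\frac{(2j+2)(2j+1)}{2(4j+3)}>1$ once $j\ge2$.
\end{itemize}
The paper closes this by making the contour remainder explicit. For $d\ge3$ it shows
\[
\frac{|I_R(m)|}{|I_+(m)+I_-(m)|}\le\frac{L(R)\,\sup_{\mathcal E_R}|\sigma|}{4\pi\,\big|\re\big(\res(\sigma,z_+)\,i^{m-1}\big)\big|}\Big(\frac{\rho_\ast}{R}\Big)^{m+1}.
\]
For $d=2$ there is an extra factor $\big((1+\rho_\ast^{-2})/(1-R^{-2})\big)^{1/2}$, and for $d=1$ an analogous rectangle-contour bound. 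The paper then fixes $R=8$ for $\tanh$, $R=16$ for the logistic function and $R=5$ for the rational activations. It checks numerically that the right-hand side is already $<1$ at the smallest admissible $m$. Because the bound decays geometrically in $m$, this gives nonvanishing, the sign, and a uniform lower bound for every $m$ in the parity class at once. For the two rational activations you could even avoid numerics: $I_R(m)\to0$ as $R\to\infty$, so $\widehat\sigma(m)=-I_+(m)-I_-(m)$ exactly.

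For item~5, integrating by parts against the weight is a valid alternative to the paper's termwise differentiation of the Gegenbauer series, and it avoids having to justify that differentiation. However, your bookkeeping is wrong, and taken literally it gives the wrong exponent.

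The identity is $\frac{d}{dt}\big[(1-t^2)^{\lambda+\frac12}C_{m-1}^{(\lambda+1)}(t)\big]=-\frac{m(m+2\lambda)}{2\lambda}(1-t^2)^{\lambda-\frac12}C_m^{(\lambda)}(t)$. So $c_m\simeq m^{-2}$, not $m^{-1}$. Also, $c_m$ relates inner products, not coefficients. Dividing by the norms brings in $h_{m-1}^{(\lambda+1)}/h_m^{(\lambda)}=\frac{m(m+2\lambda)}{4\lambda^2}$, which cancels $c_m$ exactly. The result is that $2\lambda\,\widehat\sigma_\ast(m)$ equals the $C^{(\lambda+1)}$-coefficient of $\frac1{1+t^2}$ at degree $m-1$, which is precisely the paper's relation.

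Since $q_{m,d}$ and $q_{m-1,d+2}$ carry the same factor $(m+\frac{d-1}2)^{-1}$, this gives $\widehat\sigma(m)=\frac{q_{m,d}}{2\lambda\,q_{m-1,d+2}}\,a_{d+2}(m-1)$ with an $m$-independent prefactor. The exponent $\frac{d+1}2$ therefore comes entirely from item~3 in dimension $d+2$. Your ``$m^{-1}$ times'' would instead produce $m^{-\frac{d+3}2}$. Finally, item~5 at small odd $m$ inherits the small-degree sign question from item~3 in dimension $d+2$, so the first gap must be closed before this step is complete.
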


\subsection{Sobolev and analytic approximation rates}\label{subsec:approx-rates}

The use of fixed quasi-uniform directions is inspired by approximation ideas in \cite{mhaskar1999approximation}, while the positive quadrature input is taken from \cite{narcowich2006localized}; related constructive ideas can also be found in \cite{mhaskar2006weighted}. We develop a Hilbert-space construction adapted to the exponentially decaying and possibly sign-oscillating ultraspherical coefficients considered here. The resulting construction provides both an approximation estimate in $\mathcal H^s$ and simultaneous explicit normalized $\ell^2$ control of the outer coefficients.

We now turn from the coefficient structure of particular activations to the approximation mechanism. The next theorem is formulated under an abstract two-sided estimate for the nonzero ultraspherical coefficients. Its lower-bound component permits inversion of the nonzero low-frequency coefficients, while its upper-bound component controls the high-frequency tail. For quasi-uniform neuron directions, the theorem realizes the order $\mathcal O(n^{-\frac{r-s}{d}})$ with the error measured in $\mathcal H^s$ and with the coefficient bound \eqref{eqn_coef_bound_thm}; the following theorem gives the corresponding analytic convergence estimate.

\begin{theorem}\label{thm_main_sob}
    Let $d,n\in\NN$, let $0\leq s\leq r$, and let $\{\theta_j^*\}_{j=1}^n\subset\SS^d$ be quasi-uniform. Suppose that $\sigma\in\mathcal{L}_{w_d}^2([-1,1])$ and that there exist $\rho>1$ and $\alpha\in\RR$ such that
    \begin{equation}\label{eqn_decay_thm1}
        \widehat{\sigma}(m)^2\simeq m^{2\alpha}\rho^{-2m},\qquad m\in E_\sigma,
    \end{equation}
    Then, for every $f\in\mathcal{H}^r(\SS^d)$ satisfying Assumption~\ref{assum_nece}, there exist $a_1,\dots,a_n\in\RR$ with
    \begin{equation}\label{eqn_coef_bound_thm}
        \Big(n\sul_{j=1}^na_j^2\Big)^{\frac12}\lesssim\rho^{C_3\sqrt[d]{n}}\|f\|_{\mathcal{L}^2(\SS^d)}
    \end{equation}
    such that
    \begin{equation}\label{eqn:sob_rate}
        \Bigl\|f-\sul_{j=1}^n a_j\sigma(\theta_j^*\cdot\circ)\Bigr\|_{\mathcal{H}^\ss(\SS^d)}\lesssim n^{-\frac{r-s}{d}}\|f\|_{\mathcal{H}^r(\SS^d)}.
    \end{equation}
    All the corresponding constants are independent of $n,\{\theta_j^*\}_{j=1}^n$, and $f$.
\end{theorem}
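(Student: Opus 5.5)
The construction is quadrature-based. Take the degree $J=\lfloor C_2h^{-1}\rfloor\simeq n^{1/d}$ and the positive weights $\tau_j\simeq n^{-1}$ from Lemma~\ref{lem:quadrature}. Let $P_Jf=\sum_{m\le J}\Pi_mf$ be the truncation. By Assumption~\ref{assum_nece}, every $m$ with $\Pi_mf\ne0$ lies in $E_\sigma$, so $\widehat\sigma(m)\ne0$ there. Set
\[
g:=\sum_{m\le J,\ m\in E_\sigma}\widehat\sigma(m)^{-1}\Pi_mf\in\mathbb P_J(\SS^d),\qquad a_j:=\tau_j\,g(\theta_j^*).
\]
Inserting this into \eqref{eqn_fn_expans}, the $(m,\ell)$ coefficient of $f_n$ is $\widehat\sigma(m)\sum_j\tau_jg(\theta_j^*)Y_{m,\ell}(\theta_j^*)$. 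For $m\le J$ the product $gY_{m,\ell}$ has degree at most $2J$, so the quadrature formula \eqref{eqn:quadrature_formula} makes this sum exactly $\widehat g(m,\ell)$. Hence $\Pi_mf_n=\widehat\sigma(m)\Pi_mg=\Pi_mf$ for all $m\le J$. The low frequencies are therefore reproduced exactly, and the error is
\[
f-f_n=\sum_{m>J}\big(\Pi_mf-\Pi_mf_n\big).
\]

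\textbf{Coefficient bound.} Because the weights are nonnegative,
\[
n\sum_ja_j^2\lesssim\sum_j\tau_j g(\theta_j^*)^2=\fint g^2=\sum_{m\le J}\widehat\sigma(m)^{-2}\|\Pi_mf\|^2 ,
\]
where the middle equality again uses exactness, since $g^2\in\mathbb P_{2J}$. The lower bound in \eqref{eqn_decay_thm1} gives $\widehat\sigma(m)^{-2}\lesssim m^{-2\alpha}\rho^{2m}\lesssim\rho^{2C_3J}$, after absorbing the polynomial factor $m^{-2\alpha}$ into a slightly larger exponential. Since $J\simeq n^{1/d}$, this yields \eqref{eqn_coef_bound_thm}.

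\textbf{High-frequency tail.} The term $\sum_{m>J}\Pi_mf$ is bounded in $\mathcal H^s$ by $J^{-(r-s)}\|f\|_{\mathcal H^r}\simeq n^{-(r-s)/d}\|f\|_{\mathcal H^r}$. The remaining term is $\sum_{m>J}\widehat\sigma(m)\sum_\ell\big(\sum_ja_jY_{m,\ell}(\theta_j^*)\big)Y_{m,\ell}$. By Cauchy--Schwarz and the addition formula,
\[
\sum_\ell\Big|\sum_ja_jY_{m,\ell}(\theta_j^*)\Big|^2=\sum_{i,j}a_ia_jp_m(\theta_i^*\cdot\theta_j^*)\le N(m)\,n\sum_ja_j^2 .
\]
Thus the $\mathcal H^s$ norm of this piece is at most
\[
\Big(\sum_{m>J}m^{2s}N(m)\,m^{2\alpha}\rho^{-2m}\Big)^{1/2}\Big(n\sum_j a_j^2\Big)^{1/2}.
\]

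\textbf{Main obstacle: balancing the exponentials.} The tail sum is $\lesssim J^{c}\rho^{-J}$, while the coefficient bound only gives $\rho^{J}$ times polynomial factors, so these cancel only up to a polynomial in $J$. That leaves no room for the factor $n^{-(r-s)/d}$, and a sharper argument is needed.

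I would first split $g$ by frequency and pair each low-frequency piece $\Pi_mg$ with its aliasing. Instead of the crude product bound, estimate
\[
\sum_{m'>J}\widehat\sigma(m')^2N(m')\,\big\|\textstyle\sum_j\tau_j\Pi_{m}g(\theta_j^*)\,p_{m'}(\theta_j^*\cdot\circ)\big\|^2 .
\]
This involves the ratio $\widehat\sigma(m')/\widehat\sigma(m)\lesssim\rho^{-(m'-m)}$ times polynomial factors, and the ratio is small once $m'\ge m+cJ$.

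To make this work, I would truncate the target at a smaller degree $J'=\lfloor\kappa J\rfloor$ with $\kappa<1$, use quadrature exactness up to degree $2J$, and bound the aliased frequencies $m'>J$ using the gap $m'-m\ge(1-\kappa)J$. This gives a factor $\rho^{-(1-\kappa)J}$ times a polynomial. Since $\rho^{-(1-\kappa)J}$ decays faster than any power of $J$, it absorbs the polynomial factors, and the aliasing error is bounded by $n^{-(r-s)/d}\|f\|_{\mathcal L^2}$. The frequencies $J'<m\le J$ are then handled as tail terms of $f$, at cost $J'^{-(r-s)}\simeq n^{-(r-s)/d}$. Finally I would recheck that the coefficient bound remains $\rho^{C_3n^{1/d}}\|f\|_{\mathcal L^2}$, which holds since $J'\le J$.
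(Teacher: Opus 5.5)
Your final argument is correct: you match only up to $J'=\lfloor\kappa J\rfloor$ while keeping quadrature exactness up to degree $2J$. It handles the band just above the matched degree differently from the paper. The paper matches all $m\le J$ and splits the aliasing sum at $2J$. For $m>2J$ it uses the crude bound $\|Q(m)\|_2\lesssim m^{d-1}$, as you do. For $J<m\le 2J$ there is no exponential margin, so it keeps the $\mathcal H^r$ weight in the coefficient estimate, $\|\tilde a\|_2^2\lesssim J^{-2\alpha-2r}\rho^{2J}\|f\|_{\mathcal H^r(\SS^d)}^2$. Then $\rho^{2J}$ cancels against $\rho^{-2m}$ and $J^{-2\alpha}$ against $m^{2\alpha}$, and it needs $\|Q(m)\|_2\lesssim 1$ there to avoid exactly the $N(m)\simeq J^{d-1}$ loss you identified as the obstacle. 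The paper obtains $\|Q(m)\|_2=1$ from $\mathcal Y(m)^\top\mathcal Y(m)=I$. That identity requires exactness on $\mathbb P_{2m}$, which Lemma~\ref{lem:quadrature} supplies only for $m\le J$; on $J<m\le 2J$ a Marcinkiewicz--Zygmund bound for degrees $\lesssim h^{-1}$ is genuinely needed. Your under-truncation avoids this. With $g\in\mathbb P_{J'}$, $f_n$ has no component on $(J',2J-J']$, so that band costs only $J'^{-(r-s)}\|f\|_{\mathcal H^r(\SS^d)}$. On the aliased frequencies, the margin $\rho^{-2(1-\kappa)J}$ absorbs $N(m)$ and all polynomial factors at once. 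Consequently only the stated exactness, the bound $|p_m|\le N(m)$, and $\|f\|_{\mathcal L^2(\SS^d)}$ enter the aliasing estimate, at the price of $\kappa$-dependent constants that do not depend on $n$. The coefficient bound, now of order $\rho^{2\kappa J}$ up to polynomial factors, only improves. The paper's route, once repaired, keeps the full matched band $[0,J]$ and shows the exponentials cancelling exactly rather than with room to spare. Your intermediate idea of splitting $g$ by frequency and pairing each piece with its aliasing is unnecessary. Once the truncation is lowered, the crude product bound from your high-frequency tail step already closes the argument.
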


Theorem \ref{thm_main_sob} indicates that the approximation rate improves systematically with the Sobolev smoothness of the target function. 
This naturally leads to the analytic regime, where stronger regularity yields the faster convergence rate stated in the following theorem.

\begin{theorem}\label{thm:appr_rate_sph}
    Let $d,n,s,\alpha,\sigma,\rho$ and $\{\theta_j^*\}_{j=1}^n\subset\SS^d$ be as in Theorem~\ref{thm_main_sob}, and let $\rho_0\in(1,\rho]$. Then, for every $f\in \mathrm{Hol}_{\rho_0}(\SS^d)$ satisfying Assumption~\ref{assum_nece}, there exist $a_1,\dots,a_n\in\RR$ with
    \begin{equation}
        \Big(n\sul_{j=1}^na_j^2\Big)^{\frac12}\lesssim\rho^{C_3\sqrt[d]{n}}\|f\|_{\mathcal{L}^2(\SS^d)}
    \end{equation}
    such that
    \begin{equation}\label{eqn:rate_nonuniform}
        \Bigl\|f-\sul_{j=1}^n a_j\sigma(\theta_j^*\cdot\circ)\Bigr\|_{\mathcal{H}^\ss(\SS^d)}\lesssim \rho_0^{-C_4\sqrt[d]{n}}\lt\|f\rt\|_{\mathrm{Hol}_{\rho_0}(\SS^d)}.
    \end{equation}
    Here $C_4$ depends only on $d$, and the constant implicit in $\lesssim$ is independent of $n,\{\theta_j^*\}_{j=1}^n$, and $f$. 
\end{theorem}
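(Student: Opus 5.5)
Theorem~\ref{thm:appr_rate_sph} should follow from the construction behind Theorem~\ref{thm_main_sob}, with only the final tail estimate replaced. Take $J=\lfloor C_2h^{-1}\rfloor\simeq n^{1/d}$ from Lemma~\ref{lem:quadrature}, and let $L\le J$ be a truncation degree with $L\simeq n^{1/d}$; its exact choice is fixed below. Set
\[
g:=\sul_{m\le L,\,m\in E_\sigma}\widehat\sigma(m)^{-1}\Pi_mf ,
\]
which is a polynomial of degree at most $L$. Define the coefficients by quadrature, $a_j:=\tau_j\,g(\theta_j^*)$. Then $f_n(\eta)=\sul_j\tau_j g(\theta_j^*)\sigma(\theta_j^*\cdot\eta)$ is a discretization of
\[
\fint g(\theta)\sigma(\theta\cdot\eta)\,d\theta=\sul_m\widehat\sigma(m)\Pi_mg=\Pi_{\le L}f ,
\]
where the last equality uses Assumption~\ref{assum_nece}.

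\textbf{Coefficient bound.} By Lemma~\ref{lem:quadrature}, $\tau_j\simeq n^{-1}$, so $n\sul_j a_j^2\simeq \sul_j\tau_j g(\theta_j^*)^2$. Since $g^2\in\mathbb P_{2L}\subset\mathbb P_{2J}$, the quadrature formula \eqref{eqn:quadrature_formula} is exact on it, giving $\sul_j\tau_jg(\theta_j^*)^2=\|g\|_{\mathcal L^2}^2$. By \eqref{eqn_decay_thm1},
\[
\|g\|_{\mathcal L^2}\lesssim \max_{m\le L}m^{-\alpha}\rho^{m}\,\|f\|_{\mathcal L^2}\lesssim\rho^{C_3 n^{1/d}}\|f\|_{\mathcal L^2},
\]
where the polynomial factor is absorbed by slightly enlarging $C_3$.

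\textbf{Error split.} For each degree $m$, the projection is
\[
\Pi_mf_n=\widehat\sigma(m)\sul_j\tau_j g(\theta_j^*)\,p_m(\theta_j^*\cdot\eta).
\]
For $m\le 2J-L$, the product $g\,p_m(\cdot\cdot\eta)$ has degree at most $2J$, so the quadrature is exact and $\Pi_mf_n=\Pi_mf$ for $m\le L$, while $\Pi_mf_n=0$ for $L<m\le 2J-L$. Choosing $L=J$ (or $L\approx J/2$), the error $f-f_n$ is therefore
\[
\sul_{m>L}\Pi_mf\;-\;\sul_{m>2J-L}\Pi_mf_n .
\]
The first piece satisfies
\[
\|\textstyle\sul_{m>L}\Pi_mf\|_{\mathcal H^s}^2\le \sup_{m>L}\dfrac{m^{2s}+1}{\rho_0^{2m}+1}\,\|f\|_{\mathrm{Hol}_{\rho_0}}^2\lesssim \rho_0^{-2L}L^{2s}\|f\|_{\mathrm{Hol}_{\rho_0}}^2 ,
\]
which is $\rho_0^{-C_4n^{1/d}}$ after absorbing the polynomial factor into the exponent.

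\textbf{Aliasing term (main obstacle).} For $m>2J-L$, bound
\[
\|\Pi_mf_n\|_{\mathcal L^2}^2=\widehat\sigma(m)^2\sul_\ell\Big|\sul_j\tau_jg(\theta_j^*)Y_{m,\ell}(\theta_j^*)\Big|^2 .
\]
Cauchy--Schwarz together with the addition formula $\sul_\ell Y_{m,\ell}(\theta)^2=N(m)$ and $\sul_j\tau_j=1$ (exactness on constants) gives
\[
\|\Pi_mf_n\|_{\mathcal L^2}^2\le \widehat\sigma(m)^2N(m)\,\|g\|_{\mathcal L^2}^2 .
\]
Hence
\[
\|\textstyle\sul_{m>2J-L}\Pi_mf_n\|_{\mathcal H^s}^2\lesssim\sul_{m>2J-L}m^{2s+2\alpha+d-1}\rho^{-2m}\,\|g\|_{\mathcal L^2}^2 .
\]
The key point is that $\|g\|_{\mathcal L^2}$ should be estimated with the $\mathrm{Hol}_{\rho_0}$ norm rather than crudely:
\[
\|g\|_{\mathcal L^2}^2\lesssim\sul_{m\le L}m^{-2\alpha}\rho^{2m}\rho_0^{-2m}\|\Pi_mf\|_{\mathrm{Hol}_{\rho_0}}^2\lesssim L^{C}\rho^{2L}\rho_0^{-2L}\|f\|_{\mathrm{Hol}_{\rho_0}}^2 ,
\]
using $\rho_0\le\rho$. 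The aliasing term is then bounded by
\[
\mathrm{poly}(J)\,\rho^{-(2J-L)}\rho^{L}\rho_0^{-L}\|f\|_{\mathrm{Hol}_{\rho_0}}=\mathrm{poly}(J)\,\rho^{-2(J-L)}\rho_0^{-L}\|f\|_{\mathrm{Hol}_{\rho_0}} .
\]
With $L=J$ this is $\mathrm{poly}(J)\rho_0^{-J}\|f\|_{\mathrm{Hol}_{\rho_0}}$; taking $L$ a fixed fraction of $J$ leaves an extra decaying factor instead. In either case the result is $\lesssim\rho_0^{-C_4n^{1/d}}\|f\|_{\mathrm{Hol}_{\rho_0}}$ once the polynomial factors are absorbed into a slightly smaller $C_4$, which depends only on $d$ through $C_2$ and the constant in $h\simeq n^{-1/d}$.

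The delicate bookkeeping is this balance between $L$ and $J$, together with the requirement that the coefficient bound still uses $\|f\|_{\mathcal L^2}$. Both are satisfied because each step is uniform in $n$ and in the point set.
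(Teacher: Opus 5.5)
Your proposal is correct and, with $L=J$, is the paper's own argument: the same quadrature-based coefficients $a_j=\tau_j g(\theta_j^*)$ with exact matching up to degree $J$, the tail of $f$ above $J$ bounded by $\rho_0^{-2J}J^{2s}\|f\|_{\mathrm{Hol}_{\rho_0}(\SS^d)}^2$, and the high-degree part of $f_n$ bounded by $N(m)\widehat\sigma(m)^2\|\tilde a\|_2^2$ with $\|\tilde a\|_2$ estimated through the $\mathrm{Hol}_{\rho_0}$ norm using $\rho_0\le\rho$. Your Cauchy--Schwarz bound on $\|\Pi_m f_n\|$ is the trace form of the paper's estimate $\|Q(m)\|_2\lesssim m^{d-1}$, and the optional truncation $L<J$ is an unnecessary variant that gives the same rate.
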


Theorem~\ref{thm:appr_rate_sph} complements Theorem~\ref{thm_main_sob} by showing that, in the analytic regime, the same linearized construction achieves a rate governed by the analyticity radius.

The coefficient estimates in Theorem~\ref{thm_coeff} verify the hypothesis \eqref{eqn_decay_thm1} for the concrete activations and thereby connect the two parts of the analysis. We obtain the following consequences.

\begin{corollary}\label{cor_appr_rate}
    Let $d,n,s,r$, $\{\theta_j^*\}_{j=1}^n$ be as in Theorem \ref{thm_main_sob}.
    \begin{enumerate}
    \item Let $\displaystyle\sigma(t)=\tanh(t)$, then \eqref{eqn:sob_rate} holds for odd functions $f\in\mathcal{H}^r(\SS^d)$, \eqref{eqn:rate_nonuniform} holds for odd functions $f\in \mathrm{Hol}_{\rho_0}(\SS^d)$ with $\rho_0<\frac{\pi}{2}+\sqrt{\frac{\pi^2}{4}+1}$.
    \item Let $\displaystyle\sigma(t)=\frac{1}{1+e^{-t}}$, then \eqref{eqn:sob_rate} holds for odd functions $f\in\mathcal{H}^r(\SS^d)$, \eqref{eqn:rate_nonuniform} holds for odd functions $f\in \mathrm{Hol}_{\rho_0}(\SS^d)$ with $\rho_0<\pi+\sqrt{\pi^2+1}$.
    \item Let $\sigma$ be one of the following functions:
    \begin{equation*}
        \frac{t}{1+t^2},\quad\arctan(t),
    \end{equation*}
    then \eqref{eqn:sob_rate} holds for odd functions $f\in\mathcal{H}^r(\SS^d)$, \eqref{eqn:rate_nonuniform} holds for odd functions $f\in \mathrm{Hol}_{\rho_0}(\SS^d)$ with $\rho_0<1+\sqrt{2}$.
    \item Let $\displaystyle\sigma(t)=\frac{1}{1+t^2},$
    then \eqref{eqn:sob_rate} holds for even functions $f\in\mathcal{H}^r(\SS^d)$, \eqref{eqn:rate_nonuniform} holds for even functions $f\in \mathrm{Hol}_{\rho_0}(\SS^d)$ with $\rho_0<1+\sqrt{2}$.
    \item Let $\displaystyle\sigma(t)=\frac{1+t}{1+t^2},$
    then \eqref{eqn:sob_rate} holds for all $f\in\mathcal{H}^r(\SS^d)$, \eqref{eqn:rate_nonuniform} holds for all $f\in \mathrm{Hol}_{\rho_0}(\SS^d)$ with $\rho_0<1+\sqrt{2}$.
    \end{enumerate}
    All the corresponding constants are independent of $n,\{\theta_j^*\}_{j=1}^n$, and $f$.    
\end{corollary}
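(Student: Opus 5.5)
This corollary is a direct combination of Theorem~\ref{thm_coeff} with Theorems~\ref{thm_main_sob} and~\ref{thm:appr_rate_sph}. For each activation I would verify the two hypotheses of those theorems: the two-sided decay \eqref{eqn_decay_thm1} on $E_\sigma$, and Assumption~\ref{assum_nece} for the stated parity class of targets.

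\emph{Decay.} For items 1--4, Theorem~\ref{thm_coeff} gives $|\widehat\sigma(m)|\simeq m^{\alpha}\rho^{-m}$ on $E_\sigma$, with explicit $\alpha$ and $\rho$. For $\tanh$, $t/(1+t^2)$ and $1/(1+t^2)$ this holds on all of $E_\sigma$, so \eqref{eqn_decay_thm1} holds with $\alpha=-\frac{d-1}{2}$. For $\arctan$ we take $\alpha=-\frac{d+1}{2}$ and $\rho=\rho_3$.

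The logistic sigmoid has $E_\sigma=\{0\}\cup(2\NN_0+1)$, and the extra degree $m=0$ needs comment. A single index only changes the constants in \eqref{eqn_decay_thm1}, since $\widehat\sigma(0)\neq0$. For odd targets, $\Pi_0 f=0$, so the analysis never needs that degree in any case.

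For item 5, $\sigma=\frac{1}{1+t^2}+\frac{t}{1+t^2}$. The first summand has only even-degree coefficients and the second only odd-degree ones. Hence $\widehat\sigma(m)$ equals the coefficient of the first for $m$ even and of the second for $m$ odd, and by items 3--4 of Theorem~\ref{thm_coeff} we get $E_\sigma=\NN_0$ with $|\widehat\sigma(m)|\simeq m^{-\frac{d-1}{2}}\rho_3^{-m}$ for all $m$.

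\emph{Compatibility.} Since $p_m(-t)=(-1)^mp_m(t)$, we have $\Pi_m f(-\eta)=(-1)^m\Pi_mf(\eta)$. So an odd $f$ has $\Pi_mf=0$ for even $m$, and an even $f$ has $\Pi_mf=0$ for odd $m$. This is exactly Assumption~\ref{assum_nece} with the $E_\sigma$ identified above. In item 2 the degree $0$ is not excluded, but that is harmless for odd $f$. In item 5, $E_\sigma=\NN_0$, so no restriction on $f$ is needed.

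\emph{Conclusion.} With both hypotheses verified, Theorem~\ref{thm_main_sob} yields \eqref{eqn:sob_rate}. Theorem~\ref{thm:appr_rate_sph} yields \eqref{eqn:rate_nonuniform} for $\rho_0\in(1,\rho]$, and in particular for $\rho_0<\rho$ with $\rho$ equal to $\rho_1$, $\rho_2$ or $\rho_3$ respectively.

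The main point needing care is the sigmoid and item 5. For the sigmoid, one must check that $\widehat\sigma(m)=0$ for even $m\ge2$: this holds because $\sigma-\frac12$ is odd, so only $m=0$ survives among the even degrees. One must also check that the abstract theorem tolerates the finitely many low degrees, where only nonvanishing is known; finitely many indices affect only the constants. In item 5, the obstacle is confirming that the parity splitting gives nonvanishing on every degree, including small $m$. If Theorem~\ref{thm_coeff} left any small degree with $\widehat\sigma(m)=0$ in doubt, I would check those cases directly, for instance by computing low-degree coefficients via a generating function.
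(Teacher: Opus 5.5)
Your proposal is correct and takes the paper's route: the paper gets the corollary by noting that Theorem~\ref{thm_coeff} verifies \eqref{eqn_decay_thm1} for each activation and then invoking Theorems~\ref{thm_main_sob} and~\ref{thm:appr_rate_sph}. The details the paper leaves implicit you supply yourself: the parity verification of Assumption~\ref{assum_nece}, the treatment of the degree $m=0$, and the even/odd splitting $\frac{1+t}{1+t^2}=\frac{1}{1+t^2}+\frac{t}{1+t^2}$ for item~5, which Theorem~\ref{thm_coeff} does not list.
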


In \cite{liu2025achieving}, the sharp Sobolev approximation rate $\mathcal{O}(n^{-\frac{r}{d}})$ was also established for linearized shallow ReLU$^k$ networks on both Euclidean domains and spheres, in a more general setting. However, on the sphere, the estimate
$$
\inf_{f_n\in L_n^k}
\|f-f_n\|_{\mathcal{L}^2(\SS^d)}
\lesssim
n^{-\frac{r}{d}}
\|f\|_{\mathcal{H}^r(\SS^d)}
$$
holds only for $r\le \frac{d+2k+1}{2}$. This phenomenon, referred to as \emph{saturation} \cite{mao2025sharp}, reflects an intrinsic limitation imposed by the regularity of the activation: higher regularity $k$ permits sharp approximation up to larger values of $r$, but does not extend beyond this threshold. More broadly, converse and saturation theories for Gaussian networks and radial basis-function interpolation were developed in \cite{mhaskar2004gaussian,mhaskar2010bernstein,schaback2002inverse}.

For the analytic activations treated here, Theorem~\ref{thm_main_sob} yields the sharp order $\mathcal{O}(n^{-\frac{r}{d}})$ for arbitrary $r>0$ on the Sobolev scale. In the analytic estimate, however, the upper bound furnished by Theorem~\ref{thm:appr_rate_sph} is limited by the exponential decay scale of the activation coefficients, which is determined by the nearest poles through $\rho_\ast$. The present upper bound does not improve once the target analyticity radius exceeds the corresponding critical scale, suggesting a limitation set by the analyticity scale of the activation. A matching approximation lower bound would be needed to interpret this limitation as a genuine saturation phenomenon.


The deterministic theorem can also be combined with standard geometric estimates for independent uniform points on $\SS^d$. In this setting, the resulting model is commonly called a \emph{random feature method} or an \emph{extreme learning machine}; see, for example, \cite{rahimi2007random,rahimi2008weighted,huang2006extreme,bach2017breaking}. As in the random-direction argument of \cite[Theorem~6.2]{liu2025achieving}, one extracts a quasi-uniform subset with high probability and then applies the deterministic estimate for quasi-uniform directions. The following corollary records the resulting logarithmically modified $\mathcal H^s$ rate for the analytic activations considered here.

\begin{corollary}\label{thm:random_linear}
Let $0\le \ss\le \rr$, let $\{\theta_j\}_{j=1}^n$ be i.i.d.\ uniform samples from $\SS^d$, and let $\sigma$ be one of the activation functions in Corollary~\ref{cor_appr_rate} and
$$L_n^\sigma=L_n^\sigma(\{\theta_j\}_{j=1}^n)=\left\{\eta\mapsto\sum_{j=1}^na_j\sigma(\theta_j\cdot\eta):a_j\in\mathbb R\right\}.$$
Then for any $\delta\in(0,1)$ and any function $f\in\mathcal{H}^r(\SS^d)$ with the same parity property as in Corollary~\ref{cor_appr_rate}, with probability at least $1-\delta$,
\begin{equation}\label{eqn:rate_random_constrained_inf}
\inf_{f_n\in L_n^\sigma}
\|f-f_n\|_{\mathcal{H}^\ss(\SS^d)}
\ \lesssim\
\Big(\frac{n}{\log(n/\delta)}\Big)^{-\frac{\rr-\ss}{d}}\,
\|f\|_{\mathcal{H}^\rr(\SS^d)}.
\end{equation}
Consequently, the expected error estimate satisfies
\begin{equation}\label{eqn:rate_expectation_constrained}
\mathbb{E}_n\Bigl[\inf_{f_n\in L_n^\sigma}
\|f-f_n\|_{\mathcal{H}^\ss(\SS^d)}\Bigr]
\ \lesssim\
\Big(\frac{n}{\log n}\Big)^{-\frac{\rr-\ss}{d}}\,
\|f\|_{\mathcal{H}^\rr(\SS^d)}.
\end{equation}
\end{corollary}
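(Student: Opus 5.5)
The plan is to reduce Corollary~\ref{thm:random_linear} to the deterministic estimate of Corollary~\ref{cor_appr_rate}, which rests on Theorem~\ref{thm_main_sob}, by extracting a quasi-uniform subset of the random points. This follows the random-direction argument of \cite[Theorem~6.2]{liu2025achieving}.

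\textbf{Step 1: a quasi-uniform subset with high probability.} Set $h_n := \big(C\log(n/\delta)/n\big)^{1/d}$ with $C$ large. Cover $\SS^d$ by $K\simeq h_n^{-d}$ caps of radius $h_n/4$, each of normalized measure $\gtrsim h_n^d$. The probability that a fixed cap misses all $n$ samples is at most $(1-ch_n^d)^n\le e^{-cnh_n^d}=(\delta/n)^{cC}$. A union bound over the $K\lesssim n$ caps makes the total failure probability at most $\delta$ once $C$ is large. On the complementary event, the mesh norm of $\{\theta_j\}$ is at most $h_n/2$. A greedy maximal $h_n$-separated subset $\{\theta_{j_k}\}_{k=1}^{n'}$ then has separation $\ge h_n$ and mesh norm $\le 2h_n$. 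It is therefore quasi-uniform with constants independent of $n$ and $\delta$, and a volume count gives $n'\simeq h_n^{-d}\simeq n/\log(n/\delta)$.

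\textbf{Step 2: apply the deterministic bound.} The subset space $L_{n'}^\sigma(\{\theta_{j_k}\})$ is contained in $L_n^\sigma(\{\theta_j\})$: set the remaining coefficients to zero. Each activation listed in Corollary~\ref{cor_appr_rate} satisfies \eqref{eqn_decay_thm1} by Theorem~\ref{thm_coeff}. For item 5, $\frac{1+t}{1+t^2}$ is the sum of items 3 and 4, whose supports are disjoint and cover $\NN_0$, with common $\rho_3$, so \eqref{eqn_decay_thm1} holds with $\alpha=-\frac{d-1}{2}$. For $\arctan$ we have $\alpha=-\frac{d+1}{2}$. The parity assumption on $f$ is exactly Assumption~\ref{assum_nece}. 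Applying Theorem~\ref{thm_main_sob} with $n'$ points gives, on the good event,
\[
\inf_{f_n\in L_n^\sigma}\|f-f_n\|_{\mathcal H^s(\SS^d)}\lesssim (n')^{-\frac{r-s}{d}}\|f\|_{\mathcal H^r(\SS^d)}\simeq\Big(\frac{n}{\log(n/\delta)}\Big)^{-\frac{r-s}{d}}\|f\|_{\mathcal H^r(\SS^d)}.
\]
The implicit constants depend only on the quasi-uniformity constants, which are uniform in $n$, $\delta$ and the sample.

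\textbf{Step 3: expectation bound.} Choose $\delta=1/n$, so that $\log(n/\delta)=2\log n$. On the good event the error is $\lesssim (n/\log n)^{-(r-s)/d}\|f\|_{\mathcal H^r}$. On the bad event, which has probability $\le 1/n$, use the trivial bound $\inf\le\|f-0\|_{\mathcal H^s}\le\|f\|_{\mathcal H^r}$. This contributes $n^{-1}\|f\|_{\mathcal H^r}$. That term is dominated by $(n/\log n)^{-(r-s)/d}$ only when $(r-s)/d\le1$. Otherwise pick $\delta=n^{-\lceil (r-s)/d\rceil}$ instead; then $\log(n/\delta)\simeq\log n$ still holds and the bad-event term becomes negligible. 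This gives \eqref{eqn:rate_expectation_constrained}.

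\textbf{Main obstacle.} The only delicate point is Step 1. One must check that the quasi-uniformity constants and the relation $n'\simeq n/\log(n/\delta)$ do not depend on the realization. Only then do the constants of Theorem~\ref{thm_main_sob} transfer uniformly. The greedy separated-subset construction provides exactly this uniformity.
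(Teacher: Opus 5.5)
Your proposal is correct and follows the paper's route: a covering and union-bound estimate for the mesh norm, extraction of a quasi-uniform subset of cardinality $\simeq n/\log(n/\delta)$, and an application of Theorem~\ref{thm_main_sob} via Corollary~\ref{cor_appr_rate} on the larger space $L_n^\sigma$. You also supply details the paper only sketches: the greedy separated-subset construction, the check that $\frac{1+t}{1+t^2}$ satisfies \eqref{eqn_decay_thm1}, and the choice $\delta=n^{-\lceil (r-s)/d\rceil}$ that gives the expectation bound \eqref{eqn:rate_expectation_constrained}, which the paper asserts without argument.
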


\begin{proof}
    Standard covering-number and concentration arguments imply that for i.i.d.\ uniform points $\{\theta_j\}$,
\begin{equation}\label{eq:mesh_norm_random}
\sup\limits_{\eta\in\SS^d}\mil_{1\leq j\leq n}d(\eta,\theta_j) \ \lesssim\ \Big(\frac{\log(n/\delta)}{n}\Big)^{1/d}
\end{equation}
with probability at least $1-\delta$. On this event, there exists a quasi-uniform subset of $\{\theta_j\}_{j=1}^n$ with mesh size $\displaystyle\simeq\Big(\frac{\log(n/\delta)}{n}\Big)^{1/d}$ and cardinality $\displaystyle\simeq\frac{n}{\log(n/\delta)}$. Theorem~\ref{thm_main_sob} and Corollary~\ref{cor_appr_rate} then give \eqref{eqn:rate_random_constrained_inf}.
\end{proof}

\section{Proofs of the main results}\label{sec_proof}

\subsection{Proofs of the ultraspherical coefficient estimates}\label{subsec:legendre-expansion}
We first prove the coefficient results in Subsection~\ref{subsec_coeff}. The argument starts from the contour representation of ultraspherical coefficients derived in \cite{cantero2012rapid} and subsequently rederived in \cite{wang2016optimal}. After translating that representation to our normalization, we deform the contour to isolate the two pole contributions. The scattered-point geometry used in the approximation theorems plays no role in this part.
\begin{lemma} \label{lem:contour-integral-repr}
    Suppose that $\sigma$ is analytic inside and on the Bernstein ellipse $\mathcal{E}_\rho:= \{\frac{1}{2}(\rho e^{i\theta} + \rho^{-1}e^{-i\theta})\mid \theta\in[0,2\pi)\}$. Then, for every $d\geq 2$ and $m\geq 0$, 
    \begin{equation}
        \widehat{ \sigma }(m) = \frac{c_{m,d}}{i\pi}\oint_{\mathcal{E}_\rho} \frac{ \sigma (z)}{(z+\sqrt{z^2-1})^{m+1}} {}_2F_1\left[\begin{matrix} m+1,\, 1-\lambda; \\ m+\lambda+1; \end{matrix}\, \frac{1}{(z+\sqrt{z^2-1})^2}\right]\,dz,
    \end{equation}
    where $\lambda=\frac{d-1}{2}$ and $\sqrt{z^2-1}$ denotes the branch on $\CC\setminus [-1,1]$ such that $\lim_{|z|\to\infty}|z+\sqrt{z^2-1}|=\infty$. Moreover,
    \begin{equation} \label{eqn:c_md-def}
        c_{m,d} = \frac{\Gamma(\lambda)\Gamma(m+1)}{\Gamma(m+\lambda)}\cdot q_{m,d}, 
    \end{equation}
    and $\Gamma(z)$ is the gamma function. The Gauss hypergeometric function ${}_2F_1$ is defined by
    \begin{equation} \label{eqn:gauss-def}
        {}_2F_1\left[\begin{matrix} a_1,\, a_2; \\ b; \end{matrix}\, z\right]=\sum_{k=0}^\infty\frac{(a_1)_k(a_2)_k}{(b)_k}\frac{z^k}{k!}, 
    \end{equation} 
    where $(z)_k$ denotes the Pochhammer symbol defined by $(z)_k = z(z+1)\cdots(z+k-1)$ for $k \geq 1$ and $(z)_0 = 1$.
\end{lemma}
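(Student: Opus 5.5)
The plan is to take the contour representation of the Gegenbauer coefficients $\widehat\sigma_\ast(m)$ from \cite{cantero2012rapid,wang2016optimal} as given, and translate it into our normalization with the conversion factor $q_{m,d}$ from \eqref{eqn:coeff-relation}. The cited representation has the form
\[
\widehat\sigma_\ast(m)=\frac{\Gamma(\lambda)\,m!}{i\pi\,\Gamma(m+\lambda)}\oint_{\mathcal E_\rho}\frac{\sigma(z)}{(z+\sqrt{z^2-1})^{m+1}}\,{}_2F_1\!\left[\begin{matrix} m+1,\,1-\lambda;\\ m+\lambda+1;\end{matrix}\,\frac{1}{(z+\sqrt{z^2-1})^2}\right]dz .
\]
Multiplying by $q_{m,d}=\widehat\sigma(m)/\widehat\sigma_\ast(m)$ then gives the stated formula with $c_{m,d}$ as in \eqref{eqn:c_md-def}. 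To keep the argument self-contained, I would also sketch its derivation in the following steps.

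First, by orthogonality and Cauchy's integral formula applied to $\sigma$ (analytic inside $\mathcal E_\rho$), I would write
\[
\langle\sigma,C_m^{(\lambda)}\rangle_{w_d}=\frac1{2\pi i}\oint_{\mathcal E_\rho}\sigma(z)\,Q_m(z)\,dz,\qquad Q_m(z)=\int_{-1}^1\frac{(1-t^2)^{\lambda-\frac12}C_m^{(\lambda)}(t)}{z-t}\,dt,
\]
where the interchange of the $t$- and $z$-integrals is justified by Fubini, since $z$ stays a positive distance from $[-1,1]$. Second, I would identify $Q_m$, the Gegenbauer function of the second kind, in closed form. The route is to expand $1/(z-t)$ in Chebyshev/Gegenbauer form using the variable $u=z+\sqrt{z^2-1}$ with $|u|>1$; for the chosen branch this is the standard generating-function trick. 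Rodrigues' formula combined with $m$ integrations by parts then gives
\[
Q_m(z)\propto \int_{-1}^1\frac{(1-t^2)^{m+\lambda-\frac12}}{(z-t)^{m+1}}\,dt,
\]
which is an Euler integral. That integral evaluates to $u^{-(m+1)}\,{}_2F_1[m+1,1-\lambda;m+\lambda+1;u^{-2}]$ times gamma factors, possibly after one quadratic transformation of ${}_2F_1$. Third, I would divide by $h_m^{(\lambda)}$ and simplify the gamma ratios to recover the prefactor $\Gamma(\lambda)\Gamma(m+1)/\Gamma(m+\lambda)$, and finally multiply by $q_{m,d}$.

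The main obstacle is the second step: getting the exact hypergeometric parameters and constants. The quadratic transformation that converts the Euler-integral form in $1/z^2$ into the $u^{-2}$ form is where errors in the constants or the branch would most likely creep in. Since the paper explicitly attributes the representation to \cite{cantero2012rapid,wang2016optimal}, I would rely on their stated identity for this step. I would only check it on a sanity case: for $\lambda=1$ (so $d=3$) the ${}_2F_1$ equals $1$ and the formula reduces to the classical Chebyshev-type integral $\oint \sigma(z)\,u^{-(m+1)}\,dz$.

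The remaining verifications are routine. Convergence of the ${}_2F_1$ series holds for $|u^{-2}|<1$ on $\mathcal E_\rho$. The branch of $\sqrt{z^2-1}$ is analytic on $\CC\setminus[-1,1]$, which allows the contour to be deformed freely within the region of analyticity of $\sigma$.
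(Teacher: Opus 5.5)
Your proposal is correct and is the paper's proof: the paper invokes \cite[Theorem 3.2]{wang2016optimal} for the Gegenbauer coefficients $\widehat\sigma_\ast(m)$ and then multiplies by $q_{m,d}$ from \eqref{eqn:coeff-relation} to obtain $c_{m,d}$. This identification is valid because the weight $(1-t^2)^{\frac{d-2}{2}}$ is exactly the Gegenbauer weight for $\lambda=\frac{d-1}{2}$. Your supplementary derivation sketch and the $\lambda=1$ check are consistent with this, but they are not needed.
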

\begin{proof}
    Applying \cite[Theorem 3.2]{wang2016optimal} and relation \eqref{eqn:coeff-relation} proves the lemma.
\end{proof}

\begin{lemma}[{\cite[Theorem 2.2.1]{andrews1999special}}] \label{lem:gauss-euler-repr}
    For all $\lambda>0$, the Gauss hypergeometric function admits the Euler integral representation: 
    \begin{equation}\label{eqn:gauss-euler-repr}
        {}_2 F_1\left[\begin{matrix} m+1,\, 1-\lambda; \\ m+1+\lambda; \end{matrix}\, z\right] = \frac{\Gamma(m+\lambda+1)}{\Gamma(m+1)\Gamma(\lambda)}\int_{0}^1 t^m(1-t)^{\lambda-1}(1-zt)^{\lambda-1}\,dt,
    \end{equation}
    where the power function $w^{\lambda-1}$ in the integral is understood in the principal branch, with branch cut along $(-\infty,0]$ in the $w$-plane.
\end{lemma}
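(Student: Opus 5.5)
The plan is to verify the Euler representation \eqref{eqn:gauss-euler-repr} directly from the series definition \eqref{eqn:gauss-def}, using the Beta integral, and then to extend it by analytic continuation. Throughout we have $a_1=m+1$, $a_2=1-\lambda$ and $b=m+1+\lambda$, so that $b-a_1=\lambda>0$ and $a_1>0$. These are exactly the conditions under which the integral $\int_0^1 t^m(1-t)^{\lambda-1}(1-zt)^{\lambda-1}\,dt$ converges absolutely for every admissible $z$. At $t=1$ the factor $(1-t)^{\lambda-1}$ is integrable because $\lambda>0$. The factor $(1-zt)^{\lambda-1}$ is bounded on $[0,1]$ as long as $1-zt$ stays away from $(-\infty,0]$.

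\textbf{Step 1: the case $|z|<1$.} First take $|z|<1$. By the binomial series in the principal branch,
\[
(1-zt)^{\lambda-1}=\sum_{k=0}^\infty \frac{(1-\lambda)_k}{k!}(zt)^k,\qquad t\in[0,1].
\]
This series converges uniformly in $t\in[0,1]$, because it is dominated by $\sum_k |(1-\lambda)_k|\,|z|^k/k!<\infty$. That domination justifies integrating term by term against the integrable weight $t^m(1-t)^{\lambda-1}$. Each term is a Beta integral:
\[
\int_0^1 t^{m+k}(1-t)^{\lambda-1}\,dt=\frac{\Gamma(m+k+1)\Gamma(\lambda)}{\Gamma(m+k+1+\lambda)}.
\]
We then rewrite the Gamma functions with Pochhammer symbols:
\[
\Gamma(m+1+k)=\Gamma(m+1)\,(m+1)_k,\qquad \Gamma(m+1+\lambda+k)=\Gamma(m+1+\lambda)\,(m+1+\lambda)_k .
\]
With these substitutions, the integral becomes
\[
\frac{\Gamma(m+1)\Gamma(\lambda)}{\Gamma(m+1+\lambda)}\sum_{k=0}^\infty\frac{(m+1)_k(1-\lambda)_k}{(m+1+\lambda)_k}\frac{z^k}{k!}.
\]
Multiplying by the prefactor $\frac{\Gamma(m+\lambda+1)}{\Gamma(m+1)\Gamma(\lambda)}$ yields exactly the hypergeometric series \eqref{eqn:gauss-def}.

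\textbf{Step 2: extension beyond the unit disc.} If needed, we extend the identity to $z\in\CC\setminus[1,\infty)$. For such $z$ and $t\in[0,1]$, the number $1-zt$ never lies on $(-\infty,0]$. Hence the integrand is analytic in $z$, and the integral is holomorphic there, by locally uniform bounds and Morera's theorem. The identity then holds on that whole domain by the identity theorem. In the application in Lemma~\ref{lem:contour-integral-repr}, the argument is $(z+\sqrt{z^2-1})^{-2}$, which has modulus strictly less than $1$ on the ellipse. So Step~1 already suffices there.

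\textbf{Main obstacle.} The only delicate point is the interchange of sum and integral, which is handled by the uniform domination above. One must also keep track of the principal branch, so that the binomial series matches $w^{\lambda-1}$ with its cut along $(-\infty,0]$. This holds because $\re(1-zt)>0$ whenever $|z|<1$.
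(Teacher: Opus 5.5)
Your proof is correct and follows the standard argument behind \cite[Theorem 2.2.1]{andrews1999special}, which the paper cites without giving a proof of its own: principal-branch binomial expansion of $(1-zt)^{\lambda-1}$ for $|z|<1$, termwise Beta integrals justified by domination, and analytic continuation to $\CC\setminus[1,\infty)$. You are also right that the paper only ever uses arguments of modulus less than $1$, namely $(z+\sqrt{z^2-1})^{-2}$ and $-\rho_\ast^{-2}$, so Step~1 alone suffices there.
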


\begin{proof}[Proof of Theorem \ref{thm:tanh-coeff-rate}]
    We split the analysis into two cases. 
    
    \textbf{Case 1:} $d\geq 2$. In this case, $\lambda=\frac{d-1}{2}\neq 0$.
    
    The only singularities of $\sigma(z)$ in $\mathcal{E}_R$ are $z_\pm$. Choose $1<\rho<\rho_\ast$. Then $\sigma$ is analytic in $\mathcal{E}_\rho$, and Lemma~\ref{lem:contour-integral-repr} gives

    \begin{equation}
        \widehat{\sigma}(m) = \frac{c_{m,d}}{i\pi}\oint_{\mathcal{E}_\rho} \frac{\sigma(z)}{(z+\sqrt{z^2-1})^{m+1}} {}_2F_1\left[\begin{matrix} m+1,\, 1-\lambda; \\ m+\lambda+1; \end{matrix}\, \frac{1}{(z+\sqrt{z^2-1})^2}\right]\,dz.
    \end{equation}
    
    Because the integrand is analytic in $\mathcal{E}_R\setminus\bigl([-1,1]\cup\{z_\pm\}\bigr)$, we may deform the contour without changing the value of the integral. Deforming $\mathcal{E}_\rho$ to the larger ellipse $\mathcal{E}_R$ (positively oriented), 
    and denoting by $\mathcal{C}_\pm$ two sufficiently small positively oriented circles centered at $z_\pm$, we obtain

    $$\widehat{\sigma}(m) = I_R(m) - I_+(m) - I_-(m),$$
    where $I_R(m)$ denotes the integral on $\mathcal{E}_R$, and $I_\pm (m)$ denotes the integral on $\mathcal{C}_\pm$.

    \begin{figure}[H]
        \centering
        \includegraphics[width=0.5\linewidth]{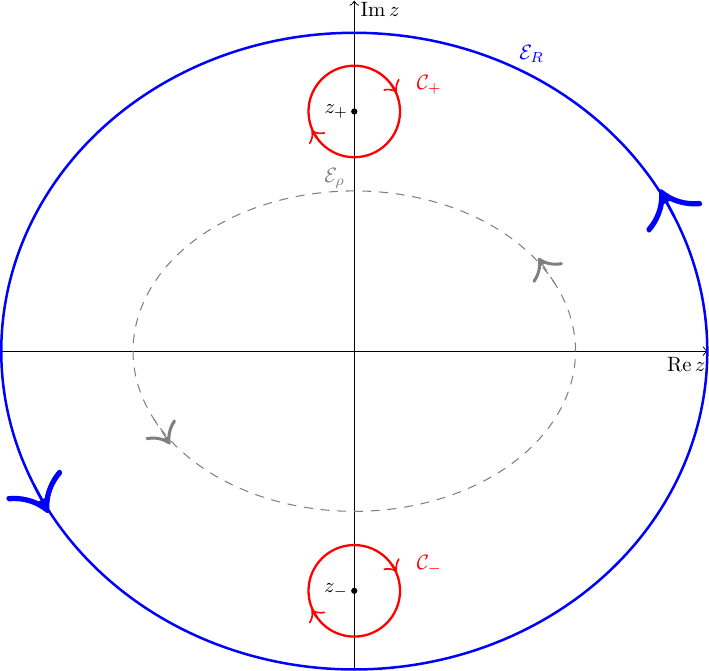}
        \caption{Ellipse Contour}
        \label{fig:ellipse-contour}
    \end{figure}

    For $I_R(m)$, since the function $z+\sqrt{z^2-1}$ transforms $\mathcal E_R$ into $\mathcal C_R = \{|z|=R\}$, by Lemma \ref{lem:gauss-euler-repr} we get
    \begin{equation}\label{eqn:I_R(m)-point-estimate}
    \begin{split}
        &\quad\left|\frac{ \sigma (z)}{(z+\sqrt{z^2-1})^{m+1}} {}_2F_1\left[\begin{matrix} m+1,\, 1-\lambda; \\ m+\lambda+1; \end{matrix}\, \frac{1}{(z+\sqrt{z^2-1})^2}\right]\right|\\
        & \leq \sup_{z\in\mathcal{E}_R}| \sigma (z)|\,R^{-m-1}\frac{\Gamma(m+\lambda+1)}{\Gamma(m+1)\Gamma(\lambda)}\int_{0}^1 t^m(1-t)^{\lambda-1}\left|1-\frac{t}{(z+\sqrt{z^2-1})^2}\right|^{\lambda-1}\,dt \\
        &\leq \sup_{z\in\mathcal{E}_R}| \sigma (z)|\,R^{-m-1}\cdot\max\left( (1+R^{-2})^{\lambda-1}, (1-R^{-2})^{\lambda-1}\right),\quad \forall z\in \mathcal{E}_R,
    \end{split}
    \end{equation}
    where the last inequality follows from the identity: $$\int_0^1 t^m(1-t)^{\lambda-1}\,dt =\frac{\Gamma(m+1)\Gamma(\lambda)}{\Gamma(m+\lambda+1)}.$$ 
    Then we have
    \begin{equation}\label{eqn:I_R(m)}
        |I_R(m)|\lesssim \sup_{z\in\mathcal{E}_R}| \sigma (z)|\cdot c_{m,d}\,R^{-m}\lesssim \sup_{z\in\mathcal{E}_R}| \sigma (z)|\cdot m^{-\frac{d-1}{2}}\,R^{-m}
    \end{equation}
    where the last inequality uses Stirling's formula together with \eqref{eqn:c_md-def} and \eqref{eqn:coeff-relation} to give
    \begin{equation}\label{eqn:c_md-estimate}
        c_{m,d}\simeq m^{-\lambda}=m^{-\frac{d-1}{2}}.
    \end{equation} 

    For $I_\pm (m)$, the residue theorem gives

    \begin{equation}
        \begin{split}
            I_\pm (m) &= \frac{c_{m,d}}{i\pi}\oint_{\mathcal{C}_\pm} \frac{\sigma(z)}{(z+\sqrt{z^2-1})^{m+1}} {}_2F_1\left[\begin{matrix} m+1,\, 1-\lambda; \\ m+\lambda+1; \end{matrix}\, \frac{1}{(z+\sqrt{z^2-1})^2}\right]\,dz \\
            &= \frac{2c_{m,d}\,\res(\sigma,z_\pm)}{\rho_\ast^{m+1}(\pm i)^{m+1}} {}_2F_1\left[\begin{matrix} m+1,\, 1-\lambda; \\ m+\lambda+1; \end{matrix}\, \frac{-1}{\rho_\ast^{2}}\right].
        \end{split}
    \end{equation}

    Since $\sigma$ is real analytic, it follows that $z_+=-z_-$ and $\res(\sigma,z_+) = \overline{\res(\sigma,z_-)}$. Hence $I_+(m)=\overline{I_-(m)}$, and therefore
    \begin{equation}\label{eqn:coeff-pm}
    \begin{split}
        I_+(m) + I_-(m) &= 4c_{m,d}\re\big(\res(\sigma,z_+)(-i)^{m+1}\big)\, \rho_\ast^{-(m+1)} {}_2F_1\left[\begin{matrix} m+1,\, 1-\lambda; \\ m+\lambda+1; \end{matrix}\, \frac{-1}{\rho_\ast^{2}}\right] \\
        &= C_1\,\rho_\ast^{-(m+1)}\,\re\big(\res(\sigma,z_+)(-i)^{m+1}\big) \int_0^1 t^m(1-t)^{\lambda-1}(1+\frac{t}{\rho_\ast^2})^{\lambda-1}\,dt,
    \end{split}
    \end{equation}
    where the last equality uses \eqref{eqn:c_md-def}, \eqref{eqn:coeff-relation} and Lemma \ref{lem:gauss-euler-repr}. 

    Combining this identity with the estimate of $I_R(m)$ in \eqref{eqn:I_R(m)}, we obtain
    \begin{equation} \label{eqn:coeff-asymptotic-1}
    \begin{split}
        \widehat{\sigma}(m) &= I_R(m) - I_+(m) - I_-(m) \\ 
        &=-C_1\,\rho_\ast^{-(m+1)}\,\re\big(\res(\sigma,z_+)(-i)^{m+1}\big) \int_0^1 t^m(1-t)^{\lambda-1}(1+\frac{t}{\rho_\ast^2})^{\lambda-1}\,dt+ \mathcal O(R^{-m}),
    \end{split}
    \end{equation}
    which completes the proof of \eqref{eqn:coeff-asymptotic} in Case 1.

    Combining \eqref{eqn:coeff-asymptotic}, \eqref{eqn:coeff-nonzero-set}, and the following estimate obtained using Stirling's formula:
    \begin{align*}
        &\int_{0}^1 t^m(1-t)^{\lambda-1}(1+\frac{t}{\rho_\ast^2})^{\lambda-1}\,dt \\
        =&\  \frac{\Gamma(m+1)\Gamma(\lambda)}{\Gamma(m+\lambda+1)}\cdot\frac{\Gamma(m+\lambda+1)}{\Gamma(m+1)\Gamma(\lambda)}\int_{0}^1 t^m(1-t)^{\lambda-1}(1+\frac{t}{\rho_\ast^2})^{\lambda-1}\,dt \\
        \simeq &\ m^{-\lambda},
    \end{align*}
    we prove \eqref{eqn:coeff-bound} in Case 1.

    \textbf{Case 2:} $d=1$. In this case, $p_m(t)$ are normalized Chebyshev polynomials $$p_m(t) = \left\{\begin{aligned}
        2\cos(m\arccos t), &\quad m\geq 1, \\
        1, &\quad m=0,
        \end{aligned}\right.,\quad\|p_m\|^2_{\mathcal{L}^2_{w_1}([-1,1])}=\left\{\begin{aligned}
        2\pi, &\quad m\geq 1, \\
        \pi, &\quad m=0,
    \end{aligned}\right.$$
    and for $m\geq 1$, $\widehat\sigma(m)$ can be calculated as
    \begin{equation}
    \begin{split}
        \widehat{\sigma}(m) &= \frac{1}{\pi}\int_{-1}^1 \sigma(t)\cos(m\arccos t)(1-t^2)^{-\frac{1}{2}}\,dt \qquad (t=\cos x)\\
        &= \frac{1}{\pi}\int_0^\pi \sigma(\cos x)\cos(m x)\,dx = \frac{1}{2\pi}\int_{-\pi}^\pi \sigma(\cos x)e^{imx}\,dx
    \end{split}
    \end{equation}

    The analyticity assumption on $\sigma$ implies that the integrand is analytic on $$\{w\in \CC:\im \,w\in[0,\log R], \re\,w\in[-\pi,\pi]\}\setminus \{w_\pm=\pm\frac{\pi}{2}+i\log\rho_\ast\},$$
    where $w_\pm$ are the isolated singularities. We therefore deform the integral to the counterclockwise rectangular contour with vertices $-\pi,\pi, \pi+i\eta, -\pi+i\eta$, where $\eta=\log R$, together with two small circles $\mathcal{C}_\pm$ centered at $w_\pm$.

    By periodicity, the integrals over the two vertical edges cancel, and we obtain
    $$\widehat{\sigma}(m)= \frac{1}{2\pi} I_0(m) =\frac{1}{2\pi}\lt( I_\eta(m) + I_+(m) + I_-(m)\rt),$$
    where $I_0(m)$ denotes the integral along the real edge, $I_\eta(m)$ denotes the integral along $\{{\rm{Im}}\, z=\eta\}$, and $I_\pm(m)$ denotes the integral over $\mathcal{C}_\pm$.

    \begin{figure}[H]
        \centering
        \includegraphics[width=0.8\linewidth]{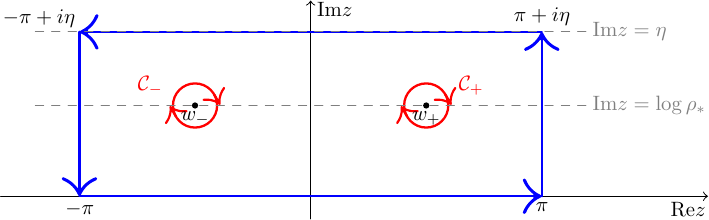}
        \caption{Rectangle Contour}
        \label{fig:rectangle-contour}
    \end{figure}

    For $I_\eta(m)$, we get
    \begin{equation}\label{eqn:I_eta(m)}
        |I_\eta(m)|\leq 2\pi \sup_{z\in[-\pi +i\eta,\pi+i\eta]} |\sigma(\cos z)| e^{-m\eta}.
    \end{equation}

    For $I_\pm(m)$, the residue theorem gives
    \begin{equation}\label{eqn:I_pm(m)-case-2}
        \begin{split}
            I_\pm(m) &= \oint_{\mathcal{C}_\pm} \sigma(\cos z)e^{imz}\,dz \\
            &= 2\pi i \cdot \frac{\mp 2\,\res(\sigma,z_\mp)}{\rho_\ast + \rho_\ast^{-1}}\rho_\ast^{-m} (\pm i)^m \\
            &= \frac{4\pi\,\res(\sigma,z_\mp)}{\rho_\ast + \rho_\ast^{-1}}(\pm i)^{m-1}\rho_\ast^{-m}, 
        \end{split}
    \end{equation}
    together with \eqref{eqn:I_eta(m)} and $\eta=\log R > \log\rho_\ast$, we obtain 
    $$ \widehat{\sigma}(m) = \frac{4\,\re(\res(\sigma,z_+)(-i)^{m-1})}{\rho_\ast + \rho_\ast^{-1}}\rho_\ast^{-m} + \mathcal O(R^{-m}), $$
    which completes the proof of \eqref{eqn:coeff-asymptotic}.

    Combining this identity with \eqref{eqn:coeff-asymptotic} proves \eqref{eqn:coeff-bound} in Case 2.
\end{proof}

    \begin{proof}[Proof of Theorem \ref{thm_coeff}]   
    The locations and residues of the singularities for the listed analytic functions are as follows.
    \begin{enumerate}
        \item $\sigma(t)=\tanh(t)$: singularities are $\{(\frac{\pi}{2}+k\pi)i:~k\in\ZZ\}$, hence we can take $z_\pm=\pm\frac{\pi i}{2}$, $\res(\sigma,z_\pm)=1$ and $R\in\Big(\frac{\pi}{2}+\sqrt{\frac{\pi^2}{4}+1},\frac{3\pi}{2}+\sqrt{\frac{9\pi^2}{4}+1}\Big)$. In this case $\rho_\ast=\rho_1=\frac{\pi}{2}+\sqrt{\frac{\pi^2}{4}+1}$.
        \item $\sigma(t) = \frac{1}{1+e^{-t}}$: singularities are $\{(\pi+2k\pi)i:~k\in\ZZ\}$, hence we can take $z_\pm=\pm\pi i$, $\res(\sigma,z_\pm)=1$ and $R\in\Big(\pi+\sqrt{\pi^2+1},3\pi+\sqrt{9\pi^2+1}\Big)$. In this case $\rho_\ast=\rho_2=\pi+\sqrt{\pi^2+1}$.
        \item $\sigma(t) = \frac{1}{1+t^2}$: singularities are $\pm i$, hence we can take $z_\pm = \pm i$, $\res(\sigma,z_\pm)=\mp\frac{i}{2}$ and $R\in (1+\sqrt{2},+\infty) $. In this case $\rho_\ast=\rho_3=1+\sqrt{2}$.
        \item $\sigma(t) = \frac{t}{1+t^2}$: singularities are $\pm i$, hence we can take $z_\pm = \pm i$, $\res(\sigma,z_\pm)=\frac{1}{2}$ and $R\in (1+\sqrt{2},+\infty) $. In this case $\rho_\ast=\rho_3=1+\sqrt{2}$.
    \end{enumerate}
        To characterize $E_\sigma$, firstly it suffices to notice $\frac{1}{1+t^2}$ is even,  $\tanh(t),\frac{t}{1+t^2}$ are odd, and $\frac{1}{1+e^{-t}}$ is odd up to a constant. Now it suffices to prove that, for the corresponding even/odd degrees $m$,
    \begin{equation}\label{eqn_sgn_sig_m}
        \mathrm{sign}(\widehat{\sigma}(m))=\mathrm{sign}(-I_+(m)-I_-(m)),
    \end{equation}
    where $-I_+(m)-I_-(m)$ is as in Theorem~\ref{thm:tanh-coeff-rate}. We consider three cases.

    \textbf{Case 1:} $d\geq 3$. In this case, $\lambda=\frac{d-1}{2}\geq 1$, therefore for $I_R(m)$, \eqref{eqn:I_R(m)-point-estimate} yields
    \begin{align*}
        |I_R(m)|&\leq \frac{c_{m,d}}{\pi} L(R)\,\sup_{z\in\mathcal{E}_R}\quad\left|\frac{ \sigma (z)}{(z+\sqrt{z^2-1})^{m+1}} {}_2F_1\left[\begin{matrix} m+1,\, 1-\lambda; \\ m+\lambda+1; \end{matrix}\, \frac{1}{(z+\sqrt{z^2-1})^2}\right]\right|\\
        & \leq \frac{c_{m,d}}{\pi} L(R)\,\sup_{z\in\mathcal{E}_R}| \sigma (z)|\,R^{-m-1}\frac{\Gamma(m+\lambda+1)}{\Gamma(m+1)\Gamma(\lambda)}\int_{0}^1 t^m(1-t)^{\lambda-1}\left|1-\frac{t}{(z+\sqrt{z^2-1})^2}\right|^{\lambda-1}\,dt \\
        &\leq \frac{c_{m,d}}{\pi} L(R)\, \sup_{z\in\mathcal{E}_R}| \sigma (z)|\,R^{-m-1}\frac{\Gamma(m+\lambda+1)}{\Gamma(m+1)\Gamma(\lambda)}\int_{0}^1 t^m(1-t)^{\lambda-1}(1+\frac{t}{R^2})^{\lambda-1}\,dt,
    \end{align*}
    where $L(R)$ denotes the circumference of $\mathcal{E}_R$. Therefore by \eqref{eqn:coeff-pm}, 
    \begin{equation}
        \begin{split}
        \frac{|I_R(m)|}{|I_+(m) + I_-(m)|} &\leq \frac{\frac{L(R)}{\pi}\sup_{z\in \mathcal{E}_R}|\sigma(z)| \int_0^1 t^m(1-t)^{\lambda-1}(1+\frac{t}{R^2})^{\lambda-1}\,dt}{4\,|\re(\res(\sigma,z_+)i^{m-1})|\int_0^1 t^m(1-t)^{\lambda-1}(1+\frac{t}{\rho_\ast^2})^{\lambda-1}\,dt} \lt(\frac{\rho_\ast}{R}\rt)^{m+1} \\
        &\leq \frac{L(R)\sup_{z\in \mathcal{E}_R}|\sigma(z)|}{4\pi\,|\re(\res(\sigma,z_+)i^{m-1})|}\cdot\lt(\frac{\rho_\ast}{R}\rt)^{m+1},
        \end{split}
    \end{equation}
    where the last inequality uses $R>\rho_\ast$. For the listed analytic functions, we can choose different $R$ to verify that the above quotient is less than $1$, which proves \eqref{eqn_sgn_sig_m} in this case.
    \begin{enumerate}
        \item $\sigma(t)=\tanh(t)$. Take $R=8$, so that $\mathcal{E}_R$ encloses only the two singularities $\pm\frac{\pi}{2}i$. We numerically obtain $$\sup_{z\in \mathcal{E}_R}|\sigma(z)|\approx 1.0212, \quad L(R)\approx 8.0004\,\pi,\quad \frac{\rho_\ast}{R} \approx 0.4291,$$
        therefore $|I_R(m)|<|I_+(m) + I_-(m)|$ for all $m\equiv 1\mod 2$.
        \item $\sigma(t)=\frac{1}{1+e^{-t}}$. Take $R=16$, so that $\mathcal{E}_R$ encloses only the two singularities $\pm\pi i$. We numerically obtain $$\sup_{z\in \mathcal{E}_R}|\sigma(z)|\approx 1.0006, \quad L(R)\approx 16.0006\,\pi,\quad \frac{\rho_\ast}{R} \approx 0.4129,$$
        therefore $|I_R(m)|<|I_+(m) + I_-(m)|$ for all $m\equiv 1\mod 2$.
        \item $\sigma(t)=\frac{1}{1+t^2}$. Take $R=5$. We numerically obtain $$\sup_{z\in \mathcal{E}_R}|\sigma(z)|\approx 0.2101, \quad L(R)\approx 5.0016\,\pi,\quad \frac{\rho_\ast}{R} \approx 0.4818,$$
        therefore $|I_R(m)|<|I_+(m) + I_-(m)|$ for all $m\equiv 0\mod 2$.
        \item $\sigma(t)=\frac{t}{1+t^2}$. Take $R=5$. We numerically obtain $$\sup_{z\in \mathcal{E}_R}|\sigma(z)|\approx 0.5042, \quad L(R)\approx 5.0016\,\pi,\quad \frac{\rho_\ast}{R} \approx 0.4818,$$
        therefore $|I_R(m)|<|I_+(m) + I_-(m)|$ for all $m\equiv 1\mod 2$.
    \end{enumerate}

    \textbf{Case 2}: $d=2$. In this case, $\lambda=\frac{d-1}{2}=\frac{1}{2}<1$, therefore for $I_R(m)$, by \eqref{eqn:I_R(m)-point-estimate}
    \begin{align*}
        |I_R(m)|&\leq \frac{c_{m,d}}{\pi} L(R)\,\sup_{z\in\mathcal{E}_R}| \sigma (z)|\,R^{-m-1}\cdot (1-R^{-2})^{-\frac{1}{2}}.
    \end{align*}
    Similarly from \eqref{eqn:coeff-pm} we can get 
    $$|I_+(m) + I_-(m)|\geq 4c_{m,d}\,|\re(\res(\sigma,z_+)i^{m-1})|\,\rho_\ast^{-(m+1)}\,(1+\frac{1}{\rho_\ast^2})^{-\frac{1}{2}}. $$
    Therefore, 
    \begin{equation}
        \begin{split}
        \frac{|I_R(m)|}{|I_+(m) + I_-(m)|} &\leq \frac{L(R)\,\sup_{z\in \mathcal{E}_R}|\sigma(z)| }{4\pi\,|\re(\res(\sigma,z_+)i^{m-1})|}\,\sqrt{\frac{1+\frac{1}{\rho_\ast^2}}{1-\frac{1}{R^2}}} \lt(\frac{\rho_\ast}{R}\rt)^{m+1} \\
        &\leq \frac{L(R)\sup_{z\in \mathcal{E}_R}|\sigma(z)|}{4\pi\,|\re(\res(\sigma,z_+)i^{m-1})|}\cdot\sqrt{\frac{1+\frac{1}{\rho_\ast^2}}{1-\frac{1}{R^2}}}\lt(\frac{\rho_\ast}{R}\rt)^{m+1}.
        \end{split}
    \end{equation}
    Taking the same values of $R$ as in Case 1, we verify that the above quotient is strictly less than $1$ for the listed analytic functions, which proves \eqref{eqn_sgn_sig_m} in this case.

    \textbf{Case 3:} $d=1$.
    Since $\eta = \log R$ and $\cos z$ maps $[-\pi +i\eta,\pi+i\eta]$ to $\mathcal{E}_R$, we have $$\sup_{z\in[-\pi +i\eta,\pi+i\eta]} |\sigma(\cos z)|=\sup_{z\in \mathcal{E}_R}|\sigma(z)|.$$
    Therefore, \eqref{eqn:I_eta(m)} and \eqref{eqn:I_pm(m)-case-2} yield
    $$ \frac{|I_\eta(m)|}{|I_+(m) + I_-(m)|}\leq \frac{\rho_\ast + \rho_\ast^{-1}}{4\,|\re(\res(\sigma,z_+)i^{m-1})|}\lt(\frac{\rho_\ast}{R}\rt)^m\sup_{z\in[-\pi +i\eta,\pi+i\eta]} |\sigma(\cos z)|.$$
    Taking the same values of $R$ as in Case 1, we verify that the above quotient is strictly less than $1$, which proves \eqref{eqn_sgn_sig_m} in this case.

    Finally, we prove \eqref{eqn_arctan_coeff}. We use the differentiation formula for Gegenbauer polynomials (see, e.g., \cite[(4.7.14)]{szego1975orthogonal}):
    \begin{equation}\label{eqn_der_Gegen}
        \frac{d}{dt}C_m^{(\lambda)}(t)=2\lambda C_{m-1}^{(\lambda+1)}(t).
    \end{equation}
    Thus we can combine \eqref{eqn_der_Gegen} with \eqref{eqn:coeff-relation} and rewrite the expansion
    \begin{equation*}
        \begin{split}
            \frac{d}{dt}\arctan(t)=\frac{d}{dt}\sul_{m=0}^\infty\widehat{\sigma}(m)p_m(t)=\frac{d}{dt}\sul_{m=0}^\infty\frac{\widehat{\sigma}(m)}{q_{m,d}}C_m^{(\lambda)}(t)=\sul_{m=1}^\infty\frac{\widehat{\sigma}(m)}{q_{m,d}}2\lambda C_{m-1}^{(\lambda+1)}(t)
        \end{split}
    \end{equation*}
    On the other hand, replacing $d$ by $d+2$ in \eqref{eqn_coef_est_3}, we can write
    \begin{equation*}
            \frac{1}{1+t^2}=\sul_{m=0}^\infty a_{d+2}(m)q_{m,d+2}^{-1}C_m^{(\lambda+1)}(t),
    \end{equation*}
    where $a_{d+2}(m)$ are the coefficients in \eqref{eqn_coef_est_3} with $d$ replaced by $d+2$:
        \begin{equation*}
        \begin{split}
            &a_{d+2}(m)\simeq (-1)^{\frac{m}{2}}m^{-\frac{d+1}{2}}\rho_3^{-m},\qquad m\equiv0\mod 2,\\
            &a_{d+2}(m)=0,\qquad m\equiv1\mod 2,
        \end{split}
    \end{equation*}
    Comparing these two expansions, we have
    \begin{equation*}
        \frac{\widehat{\sigma}(m+1)}{q_{m+1,d}}2\lambda=a_{d+2}(m)q_{m,d+2}^{-1},\quad m\in\NN_0
    \end{equation*}
    and consequently
    \begin{equation*}
        \begin{split}
            &\widehat{\sigma}(m)\simeq (-1)^{\frac{m-1}{2}}m^{-\frac{d+1}{2}}\rho_3^{-m},\qquad m\equiv1\mod 2,\\
            &\widehat{\sigma}(m)=0,\qquad m\equiv0\mod 2.
        \end{split}
    \end{equation*}
    \end{proof}

    \begin{remark}
        The approach of Theorem~\ref{thm:tanh-coeff-rate} can also be adapted to cases in which $z_\pm$ lie on $\RR$. Suppose that $z_+>0$ and $z_-=-z_+$; otherwise, one may take $R\in(\min(|z_-|,|z_+|),\max(|z_-|,|z_+|))$ so that $\mathcal{E}_R$ encloses only one singularity. The same argument then gives estimates analogous to those in Theorem~\ref{thm:tanh-coeff-rate}.
    \end{remark}

This completes the proof of the coefficient results. We now turn to the approximation theorems, whose proofs use the preceding analysis only through the abstract estimate \eqref{eqn_decay_thm1}; no further information about the pole locations or residues is required.

\subsection{Proofs of the approximation-rate theorems}
The proofs of Theorems~\ref{thm_main_sob} and \ref{thm:appr_rate_sph} use exact low-frequency matching through the positive quadrature formula in Lemma~\ref{lem:quadrature}, taken from \cite{narcowich2006localized}. The overall approximation strategy is inspired by ideas in \cite{mhaskar1999approximation}, while \cite{liu2025achieving} provides related Hilbert-space and matrix estimates for ReLU$^k$. Spherical harmonics up to a degree determined by the mesh norm are matched exactly, and the remaining high-frequency contribution is estimated using \eqref{eqn_decay_thm1}. The conclusions are the $\mathcal H^s$ estimate and explicit normalized $\ell^2$ control of the outer coefficients for the coefficient structures considered here.

We begin by observing from \eqref{eqn_fn_expans} that the spherical coefficients of $f_n=\sul_{j=1}^na_j\sigma(\theta_j^*\cdot\circ)$ are
\begin{equation}\label{eqn:hat_fn_explicit}
    \widehat{f_n}(m,\ell)=\Big(\sul_{j=1}^na_jY_{m,\ell}(\theta_j^*)\Big)\widehat{\sigma}(m).
\end{equation}

By Lemma~\ref{lem:quadrature}, there exist nonnegative numbers
\begin{equation}\label{eqn:nu_bound}
    \tau_1,\dots,\tau_n\lesssim h^d
\end{equation}
such that
\begin{equation}\label{eqn:ideal_aj_tofind}
    \begin{split}
    \widehat f(m,\ell)=&\fint_{\SS^d}f(\eta)Y_{m,\ell}(\eta)d\eta=\fint_{\SS^d}(\Pi_{m}f)(\eta)Y_{m,\ell}(\eta)d\eta\\
    =&\fint_{\SS^d}\Bigg(\sul_{\substack{\Pi_{m'}f\neq0\\m'\leq J}}\widehat{\sigma}(m')^{-1}(\Pi_{m'}f)(\eta)\Bigg)\widehat{\sigma}(m)Y_{m,\ell}(\eta)d\eta\\
    =&\sul_{j=1}^n\tau_j\Bigg(\sul_{\substack{\Pi_{m'}f\neq0\\m'\leq J}}\widehat{\sigma}(m')^{-1}(\Pi_{m'}f)(\theta_j^*)\Bigg)\widehat{\sigma}(m)Y_{m,\ell}(\theta_j^*),\qquad m\leq J.
    \end{split}
\end{equation}
where $J$ is the integer in Lemma~\ref{lem:quadrature}. Under Assumption~\ref{assum_nece}, \eqref{eqn:ideal_aj_tofind} is well-defined.

Comparing \eqref{eqn:hat_fn_explicit} and \eqref{eqn:ideal_aj_tofind}, by taking
\begin{equation}
    a_j=\tau_j\Bigg(\sul_{\substack{\Pi_{m}f\neq0\\m\leq J}}\widehat{\sigma}(m)^{-1}(\Pi_mf)(\theta_j^*)\Bigg),\qquad j=1,\dots,n,
\end{equation}
we have
\begin{equation}\label{eqn:trunc_equal}
    \widehat {f_n}(m,\ell)=\widehat f(m,\ell),\qquad m\leq J.
\end{equation}

Given \eqref{eqn:trunc_equal}, in terms of \eqref{eqn:Sob_norm_Parseval} we can write
\begin{equation}\label{eqn_est_f-fn}
    \begin{split}
        \lt\|f-f_n\rt\|_{\mathcal{H}^\ss(\SS^d)}^2
        =&\sul_{m=0}^\infty\sul_{\ell=1}^{N(m)}\lt(\widehat f(m,\ell)-\widehat{f_n}(m,\ell)\rt)^2(m^{2\ss}+1)\\
        =&\sul_{m=J+1}^\infty\sul_{\ell=1}^{N(m)}\lt(\widehat f(m,\ell)-\widehat{f_n}(m,\ell)\rt)^2(m^{2\ss}+1)\\
        \leq&4(I_1+I_2),
    \end{split}
\end{equation}
where
\begin{equation*}
    \begin{split}
        I_1=\sul_{m=J+1}^\infty\sul_{\ell=1}^{N(m)}\widehat f(m,\ell)^2m^{2\ss},\qquad
        I_2=\sul_{m=J+1}^\infty\sul_{\ell=1}^{N(m)}\widehat {f_n}(m,\ell)^2m^{2\ss},
    \end{split}
\end{equation*}
Before we proceed, we give a preliminary study of $I_2$. By \eqref{eqn:hat_fn_explicit}, we have
\begin{equation}\label{eqn:explicit_I2}
    \begin{split}
        I_2=&\sul_{m=J+1}^\infty m^{2\ss}\widehat{\sigma}(m)^2\sul_{\ell=1}^{N(m)}\Big(\sul_{j=1}^na_jY_{m,\ell}(\theta_j^*)\Big)^2=\sul_{m=J+1}^\infty m^{2\ss}\widehat{\sigma}(m)^2\sul_{\ell=1}^{N(m)}\sul_{1\leq i,j\leq n}a_ia_jY_{m,\ell}(\theta_i^*)Y_{m,\ell}(\theta_j^*)\\
        =&\sul_{m=J+1}^\infty m^{2\ss}\widehat{\sigma}(m)^2\sul_{1\leq i,j\leq n}a_ia_jp_m(\theta_i^*\cdot\theta_j^*)
        =\sul_{m=J+1}^{\infty} m^{2\ss}\widehat{\sigma}(m)^2\sul_{1\leq i,j\leq n}\frac{a_ia_j}{\sqrt{\tau_i\tau_j}}\sqrt{\tau_i\tau_j}p_m(\theta_i^*\cdot\theta_j^*)\\
        =&\sul_{m=J+1}^{\infty} m^{2\ss}\widehat{\sigma}(m)^2\tilde a^\top Q(m)\tilde a
    \end{split}
\end{equation}
where
$$\tilde a^\top=\Big(\frac{a_1}{\sqrt{\tau_1}},\dots,\frac{a_n}{\sqrt{\tau_n}}\Big),$$
and $\{Q(m)\}_{m=0}^\infty$ are the matrices given by
\begin{equation}
    (Q(m))_{i,j}=\sqrt{\tau_i\tau_j}p_m(\theta_i^*\cdot\theta_j^*),\quad i,j=1,\dots,n,
\end{equation}

By \eqref{eqn_decay_thm1}, the norm of $a$ is estimated as
\begin{equation}\label{eqn_est_a}
    \begin{split}
        \|\tilde a\|_2^2=&\sul_{j=1}^n\tau_j\Bigg(\sul_{\substack{\Pi_{m}f\neq0\\m\leq J}}\widehat{\sigma}(m)^{-1}(\Pi_mf)(\theta_j^*)\Bigg)^2=\fint_{\SS^d}\Bigg(\sul_{\substack{\Pi_{m}f\neq0\\m\leq J}}\widehat{\sigma}(m)^{-1}(\Pi_mf)(\eta)\Bigg)^2d\eta\\
        =&\sul_{\substack{\Pi_{m}f\neq0\\m\leq J}}\widehat{\sigma}(m)^{-2}\sul_{\ell=1}^{N(m)}\widehat{f}(m,\ell)^2\simeq \sul_{\substack{\Pi_{m}f\neq0\\m\leq J}}m^{-2\alpha}\rho^{2m}\sul_{\ell=1}^{N(m)}\widehat{f}(m,\ell)^2.
    \end{split}
\end{equation}
Using $|p_m(\theta_i^*\cdot\theta_j^*)|\leq p_m(1)=N(m)\simeq m^{d-1}$ and \eqref{eqn:nu_bound}, we have
\begin{equation}\label{eqn_Qm_norm1}
    \begin{split}
        \|Q(m)\|_2^2\leq&\Big(\mal_{1\leq i\leq n}\sul_{j=1}^n|\sqrt{\tau_i\tau_j}p_m(\theta_i^*\cdot\theta_j^*)|\Big)\Big(\mal_{1\leq j\leq n}\sul_{i=1}^n|\sqrt{\tau_j\tau_i}p_m(\theta_j^*\cdot\theta_i^*)|\Big)
        \lesssim m^{2(d-1)}.
    \end{split}
\end{equation}
In particular, for $m\leq2J$, we can write $Q(m)$ as
\begin{equation*}
    Q(m)=\Big(\sqrt{\tau_i\tau_j}\sul_{\ell=1}^{N(m)}Y_{m,\ell}(\theta_i^*)Y_{m,\ell}(\theta_j^*)\Big)_{i,j=1}^n=\mathcal{Y}(m)\mathcal{Y}(m)^\top,
\end{equation*}
where $\mathcal{Y}(m)$ is a $n\times N(m)$ matrix given by
\begin{equation*}
    (\mathcal{Y}(m))_{j,\ell}=\sqrt{\tau_j}Y_{m,\ell}(\theta_j^*),\quad 1\leq j\leq n,~1\leq\ell\leq N(m).
\end{equation*}
Then $\mathcal{Y}(m)^\top\mathcal{Y}(m)$ is an identity $N(m)\times N(m)$ matrix:
\begin{equation*}
    \begin{split}
        \mathcal{Y}(m)^\top\mathcal{Y}(m)=\Big(\sul_{j=1}^n\sqrt{\tau_j}Y_{m,\ell}(\theta_j^*)\sqrt{\tau_j}Y_{m,\ell'}(\theta_j^*)\Big)_{\ell,\ell'=1}^{N(m)}=\Big(\fint_{\SS^d}Y_{m,\ell}(\eta)Y_{m,\ell'}(\eta)d\eta\Big)_{\ell,\ell'=1}^{N(m)}=I
    \end{split}
\end{equation*}
and
\begin{equation}\label{eqn_Qm_norm2}
    \begin{split}
        \|Q(m)\|_2=\|\mathcal{Y}(m)\mathcal{Y}(m)^\top\|_2=\|\mathcal{Y}(m)^\top\mathcal{Y}(m)\|_2=1.
    \end{split}
\end{equation}

The preceding low-frequency matching construction and matrix estimates are common to both regularity regimes. We first complete the Sobolev estimate.

\begin{proof}[Proof of Theorem \ref{thm_main_sob}]
We divide \eqref{eqn:explicit_I2} into two parts. For $m\geq2J+1$, \eqref{eqn_decay_thm1}, \eqref{eqn_est_a}, and \eqref{eqn_Qm_norm1} give
\begin{equation}\label{eqn:est_I2_1}
    \begin{split}
        &\sul_{m=2J+1}^\infty m^{2\ss}\widehat{\sigma}(m)^2\tilde a^\top Q(m)\tilde a\lesssim\sul_{m=2J+1}^\infty m^{2\ss+2\alpha+d-1}\rho^{-2m}\|\tilde a\|_2^2\\
        \lesssim&\sul_{m=2J+1}^\infty m^{2\ss+2\alpha+d-1}\rho^{-2m}\Big(\sul_{\substack{\Pi_{m'}f\neq0\\m'\leq J}}\frac{m'^{-2\alpha}\rho^{2m'}}{m'^{2r}+1}\sul_{\ell=1}^{N(m')}(m'^{2r}+1)\widehat{f}(m',\ell)^2\Big)\\
        \lesssim&\sul_{m=2J+1}^\infty J^{-2\alpha-2r}m^{2\ss+2\alpha+d-1}\rho^{-2(m-J)}\|f\|_{\mathcal{H}^r(\SS^d)}^2\lesssim J^{-2\rr}\|f\|_{\mathcal{H}^r(\SS^d)}^2.
    \end{split}
\end{equation}
For $m\leq2J$, \eqref{eqn_decay_thm1} and \eqref{eqn_Qm_norm2} give
\begin{equation}\label{eqn:est_I2_2}
    \begin{split}
        &\sul_{m=J+1}^{2J} m^{2\ss}\widehat{\sigma}(m)^2\tilde a^\top Q(m)\tilde a\lesssim\sul_{m=J+1}^{2J} m^{2\ss+2\alpha}\rho^{-2m}\|\tilde a\|_2^2\\
        \lesssim&\sul_{m=J+1}^{2J} m^{2\ss+2\alpha}\rho^{-2m}\Big(\sul_{\substack{\Pi_{m'}f\neq0\\m'\leq J}}m'^{-2\alpha}\rho^{2m'}\sul_{\ell=1}^{N(m')}\widehat{f}(m',\ell)^2\Big)\\
        \leq&\sul_{m=J+1}^{2J} m^{2\ss+2\alpha}\rho^{-2m}\rho^{2J}J^{-2\alpha-2\rr}\Big(\sul_{\substack{\Pi_{m'}f\neq0\\m'\leq J}}m'^{2\rr}\sul_{\ell=1}^{N(m')}\widehat{f}(m',\ell)^2\Big)\\
        \lesssim& J^{2s-2r}\|f\|_{\mathcal{H}^r(\SS^d)}^2.
    \end{split}
\end{equation}

Substituting \eqref{eqn:est_I2_1} and \eqref{eqn:est_I2_2} into \eqref{eqn:explicit_I2}, we have
\begin{equation}
    I_2\lesssim J^{2s-2r}\|f\|_{\mathcal{H}^r(\SS^d)}^2\lesssim n^{-\frac{2r-2s}{d}}\|f\|_{\mathcal{H}^r(\SS^d)}^2.
\end{equation}
On the other hand, since $f\in\mathcal{H}^r(\SS^d)$,
\begin{equation}\label{eqn_est_I1}
    I_1\leq J^{2\ss-2\rr}\sul_{m=J+1}^\infty\sul_{\ell=1}^{N(m)}\widehat f(m,\ell)^2m^{2\rr}\lesssim n^{-\frac{2\rr-2\ss}{d}}\|f\|_{\mathcal{H}^r(\SS^d)}^2.
\end{equation}
Substituting these estimates into \eqref{eqn_est_f-fn} gives
\begin{equation}
    \|f-f_n\|_{\mathcal{H}^s(\SS^d)}\lesssim n^{-\frac{r-s}{d}}\|f\|_{\mathcal{H}^r(\SS^d)}.
\end{equation}
Finally, by recalling $\tau_j\simeq n^{-1}$ and \eqref{eqn_est_a}, we have
\begin{equation*}
    \Big(n\sul_{j=1}^na_j^2\Big)\simeq\|\tilde a\|_2^2\lesssim J^{-2\alpha}\rho^{2J}\sul_{\substack{\Pi_{m}f\neq0\\m\leq J}}\sul_{\ell=1}^{N(m)}\widehat{f}(m,\ell)^2\leq J^{-2\alpha}\rho^{2J}\|f\|_{\mathcal{L}^2(\SS^d)}^2\lesssim\rho^{2C_3\sqrt[d]{n}}\|f\|_{\mathcal{L}^2(\SS^d)}^2
\end{equation*}
for some $C_3>0$, which proves \eqref{eqn_coef_bound_thm}.

\end{proof}

For targets in $\mathrm{Hol}_{\rho_0}(\SS^d)$, the same exact low-frequency matching and matrix estimates apply; only the tail estimates are different.

\begin{proof}[Proof of Theorem \ref{thm:appr_rate_sph}]
Again, by \eqref{eqn:explicit_I2}, \eqref{eqn_Qm_norm1},
\begin{equation}\label{eqn:est_I2_3}
    \begin{split}
        I_2=&\sul_{m=J+1}^\infty m^{2\ss}\widehat{\sigma}(m)^2\tilde a^\top Q(m)\tilde a
        \lesssim\sul_{m=J+1}^\infty m^{2\ss+d-1}\rho^{-2m}\Big(\sul_{\substack{\Pi_{m'}f\neq0\\m'\leq J}}m'^{-2\alpha}\rho^{2m'}\sul_{\ell=1}^{N(m')}\widehat{f}(m',\ell)^2\Big)\\
        \leq&\sul_{m=J+1}^\infty m^{2\ss+d-1}\rho^{-2m}J^{-2\alpha}\Big(\frac{\rho}{\rho_0}\Big)^{2J}\Big(\sul_{\substack{\Pi_{m'}f\neq0\\m'\leq J}}\rho_0^{2m'}\sul_{\ell=1}^{N(m')}\widehat{f}(m',\ell)^2\Big)\\
        \lesssim& \ \rho_0^{-2J}J^{2\ss+d-1-2\alpha}\|f\|_{\mathrm{Hol}_{\rho_0}(\SS^d)}^2\lesssim \rho_0^{-J}\|f\|_{\mathrm{Hol}_{\rho_0}(\SS^d)}^2.
    \end{split}
\end{equation}


On the other hand, since $f\in\mathrm{Hol}_{\rho_0}(\SS^d)$,
\begin{equation}\label{eqn_est_I1_2}
    I_1\lesssim \rho_0^{-2J}J^{2\ss}\sul_{m=J+1}^\infty\sul_{\ell=1}^{N(m)}\widehat f(m,\ell)^2 \rho_0^{2m}\lesssim \rho_0^{-J}\|f\|_{\mathrm{Hol}_{\rho_0}(\SS^d)}^2.
\end{equation}
Substituting these estimates into \eqref{eqn_est_f-fn} gives
\begin{equation}
    \|f-f_n\|_{\mathcal{H}^s(\SS^d)}\lesssim \rho_0^{-\frac{J}{2}}\|f\|_{\mathrm{Hol}_{\rho_0}(\SS^d)} \leq \rho_0^{-C_4\sqrt[d]{n}}\|f\|_{\mathrm{Hol}_{\rho_0}(\SS^d)},
\end{equation}
where $J=\lfloor C_2 h^{-1}\rfloor$ as in Lemma \ref{lem:quadrature} and $C_4$ is chosen satisfying $C_4 \sqrt[d]{n}\leq J/2$.

The coefficients are the same as those in Theorem~\ref{thm_main_sob}.
\end{proof}

\section{Concluding remarks}\label{sec_concl}

This paper develops the spectral information needed to obtain refined spherical approximation estimates for several analytic activations. Starting from the contour representation of ultraspherical coefficients, we derive an asymptotic formula with explicit residue dependence for a conjugate pair of simple poles on the imaginary axis. Besides identifying the exponential scale and oscillatory phase, the formula gives a matching lower bound for the ultraspherical coefficients at the degrees where they do not vanish and determines the relevant parity subsequence. It applies directly to $\tanh$, the logistic sigmoid, and the rational activations considered here; the $\arctan$ estimate follows through differentiation.

The approximation argument is inspired by the earlier analytic-activation results in \cite{mhaskar1999approximation} and the recent work \cite{liu2025achieving}. Under an abstract two-sided estimate for the nonzero ultraspherical coefficients, we obtain the corresponding approximation estimate with an $\mathcal H^s(\SS^d)$ norm on the left-hand side and simultaneously obtain explicit normalized $\ell^2$ control of the outer coefficients. The spectral results verify the required hypothesis for the oscillatory and parity-restricted analytic activations treated in the paper. The same construction also gives the stated analytic estimate for targets in $\mathrm{Hol}_{\rho_0}(\SS^d)$.

Together, these results give a route from complex singularities to spectral asymptotics and lower bounds at the degrees where the ultraspherical coefficients do not vanish, and from this spectral information to Sobolev-norm approximation and explicit normalized $\ell^2$ coefficient control. Natural directions for further work include more general configurations of complex singularities, a direct treatment of branch-point singularities, and approximation lower bounds for possible saturation behavior on the analytic scale. Finite-bit representations of smooth functions on the sphere have been studied in \cite{mhaskar2005representation}; relating the coefficient bounds obtained here to sharper stability and quantization estimates remains an interesting problem.

\bibliographystyle{abbrv}
	\bibliography{ref}

\end{document}